\newtheorem{Theorem}{Theorem}[section]
\newtheorem{Definition}[Theorem]{Definition}
\newtheorem{Conjecture}[Theorem]{Conjecture}
\newtheorem{Corollary}[Theorem]{Corollary}
\newtheorem{Lemma}[Theorem]{Lemma}
\newtheorem{Proposition}[Theorem]{Proposition}
\theoremstyle{remark}
\newtheorem{Remark}[Theorem]{Remark}
\newtheorem{Question}[Theorem]{Question}
\theoremstyle{remark}
\newtheorem*{claim}{Claim}
\newcommand{\N}{\mathbb{N}}
\newcommand{\R}{\mathbb{R}}
\newcommand{\Z}{\mathbb{Z}}
\newcommand{\C}{\mathbb{C}}
\newcommand{\E}{\mathbb{E}}
\newcommand{\Hyp}{\mathbb{H}}
\newcommand{\Sph}{\mathbb{S}}
\DeclareMathOperator{\PGL}{PGL}
\DeclareMathOperator{\PSL}{PSL}
\DeclareMathOperator{\GL}{GL}
\DeclareMathOperator{\SL}{SL}
\lstdefinestyle{base}{
  emptylines=1,
  moredelim=**[is][\color{blue}]{@}{@},
  moredelim=**[is][\color{green}]{@@}{@@},
}
\newcommand{\smfrac}[2]{\mbox{\footnotesize$\displaystyle\frac{#1}{#2}$}}
\newcommand{\tmprod}[2]{\mbox{$\textstyle \prod\limits_{#1}^{#2}$}}
\newcommand{\mraisebox}[2]{\mbox{\raisebox{#1}{$#2$}}}
\begin{document}

\title{Linear representations of 3--manifold groups over rings}
\author{Stefan Friedl, Montek Gill and Stephan Tillmann}

\begin{abstract}
The fundamental groups of compact 3--manifolds are known to be residually finite. 
Feng Luo conjectured that a stronger statement is true, by only allowing finite groups of the form $\PGL_2(R),$ where $R$ is some finite commutative  ring with identity. We give an equivalent formulation of Luo's conjecture via faithful representations and provide various examples and a counterexample.
\end{abstract}

\primaryclass{57M27, 57M50}
\keywords{3-manifold, representation of fundamental group, graph manifold}
\makeshorttitle

\section{Introduction}
In this paper, by 3--manifold, we mean connected 3--manifold. A hyperbolic structure of finite volume on an orientable 3--manifold $M$ gives rise to a developing map $\text{dev}\co \widetilde M \to \Hyp^3$, where $\widetilde M$ is the universal cover of $M$, and an embedding $\text{hol}\co \pi_1(M) \to \operatorname{Isom}^+(\Bbb{H}^3)=\PSL_2(\C)$, which is the holonomy representation associated to the geometric structure and chosen developing map. For details, see, for example, \cite[Chapter 3]{Thurston} or \cite[Chapter 8]{rathypst}. In the case that $M$ is triangulated, the decomposition of $M$ into simplices lifts to one of $\widetilde M$ which gives rise to a labelling of the 0-skeleton of the lifted triangulation by elements of $\partial \overline{\Hyp}^3 = \C P^1$ and this labelling encodes all the information necessary to construct the holonomy representation; see \cite{ThurstonNotes} for the case of torus cusps,  and \cite{LuoTillmannYang} for the closed case. In \cite{Feng}, Luo generalises these labellings to labellings over $\mathbb{P}^1(R)$, the projective line over an arbitrary commutative ring with identity, $R$, and constructs representations into $\PGL_2(R)$. An example illustrating the strength of Luo's generalisation is given in \S\ref{sec:quaternionic}. Luo makes the following conjecture: 

\begin{Conjecture}[Luo~\cite{Feng}]\label{conj:luo} If $M$ is a compact $3$--manifold and $\gamma \in \pi_1(M) \setminus \{1\}$, there exists a finite commutative ring $R$ with identity and a homomorphism $\pi_1(M) \rightarrow \PGL_2(R)$ whose kernel does not contain $\gamma$.
\end{Conjecture}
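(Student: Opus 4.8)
\emph{Sketch of an approach.}
The plan is to regard Luo's conjecture as a sharpening of Hempel's theorem that $3$--manifold groups are residually finite, and to try to run Hempel's strategy with the finite quotients pinned down inside groups of the form $\PGL_2(R)$. Call a group $G$ \emph{$\PGL_2$--good} if for every $g\in G\setminus\{1\}$ there is a finite commutative ring $R$ and a homomorphism $G\to\PGL_2(R)$ not killing $g$; the conjecture asserts that compact $3$--manifold groups are $\PGL_2$--good. Two elementary reductions organize the attack. First, writing $\PGL_2(R)=\GL_2(R)/R^{\times}$, a finite commutative ring is a finite product of finite local rings $R=\prod_i R_i$ and $\PGL_2(R)=\prod_i\PGL_2(R_i)$, while $\PSL_2(R)\le\PGL_2(R)$ always; so it suffices to produce, separately, a homomorphism to some $\PSL_2$ or $\PGL_2$ of a finite local ring. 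Second, if $G$ maps to $\PSL_2(\mathcal O)$ or $\GL_2(\mathcal O)$ for $\mathcal O$ the ring of $S$--integers in a number field, with $g$ not killed, then reducing modulo a prime $\mathfrak p$ avoiding the finitely many primes dividing the relevant denominators and the entries of $\rho(g)-I$ gives a homomorphism to $\PSL_2(\mathbb F_q)\le\PGL_2(\mathbb F_q)$ still not killing $g$. So the conjecture would follow from (i) a decomposition of $M$ into geometric pieces whose groups are $\PGL_2$--good and (ii) a gluing lemma propagating $\PGL_2$--goodness across the JSJ tori.

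The hyperbolic case fits this cleanly. If $M$ is finite--volume hyperbolic, the holonomy $\mathrm{hol}\colon\pi_1(M)\to\PSL_2(\C)$ is faithful and, after conjugation, takes values in $\PSL_2(\mathcal O_S)$ with $\mathcal O_S$ the $S$--integers of a number field; so for any $\gamma\ne 1$, reduction modulo a suitable prime gives the required map to $\PGL_2(\mathbb F_q)$. The same works for any piece whose group is linear in rank two over a number ring --- in particular, by passing to $\overline{\Q}$--points of the $\PSL_2$--character variety, for the finitely generated Fuchsian groups, which are thus $\PGL_2$--good.

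The Seifert fibered pieces and the gluing are where I expect the genuine difficulty. If $V$ is a Seifert piece with large base orbifold, then $\pi_1(V)$ is a central extension $1\to\Z\to\pi_1(V)\to\pi_1^{\mathrm{orb}}(V)\to 1$ with $\pi_1^{\mathrm{orb}}(V)$ a non--elementary Fuchsian group; any element outside the central fibre subgroup $\Z$ is separated by a representation of $\pi_1^{\mathrm{orb}}(V)$, and a power of the fibre class is separated through $H_1(V)$ whenever the fibre has infinite order there, but separating the fibre class in the remaining cases looks obstructed: in $\PGL_2$ of a field the centralizer of a non--central element is virtually abelian (it lies in a torus--by--$\Z/2$ or in a unipotent subgroup), so a homomorphism that does not kill the central fibre class must have virtually abelian image, which a non--elementary group cannot sustain. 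Over finite local rings with nilpotents one gains a little room via congruence quotients, but it seems insufficient, and it conflicts with the gluing: in a graph manifold two Seifert pieces meet along a torus whose $\Z^2$ contains the two fibre classes non--proportionally, forcing any homomorphism to $\PGL_2(R)$ to send that $\Z^2$ into a single toral subgroup under strong compatibility constraints. I therefore expect the main obstacle --- and very possibly a genuine counterexample --- to appear here: $\PGL_2$--goodness need not survive amalgamation of Seifert pieces along $\Z^2$, and residual finiteness (Hempel) does not rescue the argument, because the finite subgroups of the groups $\PGL_2(R)$ with $R$ finite commutative are themselves restricted --- by Dickson's description of subgroups of $\PSL_2(\mathbb F_q)$ together with the fact that the kernel of $\PGL_2(R_i)\twoheadrightarrow\PGL_2(\mathbb F_{q_i})$ is a $p_i$--group, they cannot contain $A_7$, say. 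A graph manifold engineered so that a chosen fibre power is un--separable should therefore refute the conjecture as stated, while the hyperbolic case and its relatives survive.
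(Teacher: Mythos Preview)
Your proposal correctly anticipates the paper's conclusion: the conjecture is false, and the counterexample is a graph manifold. Your structural heuristic --- that in $\PGL_2$ over a field the centraliser of a non-central element is virtually abelian, so a Seifert piece with non-elementary base orbifold cannot map the regular fibre non-trivially --- is sound and points at exactly the right obstruction. However, as you yourself flag, this argument is only for fields; over finite local rings with nilpotents you write that the extra room ``seems insufficient'', and you stop there. That is the gap: you do not actually exhibit a manifold and an element and prove that no homomorphism to any $\PGL_2(R)$ separates it. Your sketch is a plausibility argument, not a disproof.

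The paper's method is quite different and bypasses the centraliser analysis entirely. It builds, for any finitely presented $G$, a \emph{universal} representation $\varphi_K\colon G\to K(S_K/I_K)$ through which every $K$--representation factors (here $K=\SL_2,\GL_2,\PSL_2,\PGL_2$), so that $G$ is residually $K$--finite if and only if $\varphi_K$ is injective. For the specific manifold $M$ obtained as the $(4,1)$--Dehn filling of the figure--eight complement --- which decomposes as a trefoil complement glued to a twisted $I$--bundle over the Klein bottle, so is indeed a graph manifold in line with your prediction --- they write down the presentation, form the ideal $I_{\SL_2}$, and verify by a Gr\"obner basis computation in SageMath that the four entries of $\varphi_{\SL_2}(b^4)-1$ lie in $I_{\SL_2}$. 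Thus $b^4$ dies under \emph{every} $\SL_2$--representation over \emph{every} commutative ring, and the equivalences of their Proposition~2.8 transfer this to $\PSL_2$ and $\PGL_2$. Your approach, if completed, would be more conceptual and would explain \emph{why} such elements die; the paper's approach is algorithmic and delivers an explicit certified counterexample without needing to control the ring.
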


At this point it is helpful to introduce the following definition.

\begin{Definition}\label{def:residual_props}
Given a group $G$, say that $G$ is \textit{residually $\PGL_2$--finite} if  for any $g \in G \setminus \{1\}$, there exists a finite commutative ring $R$ with identity and a homomorphism $G \rightarrow \PGL(2,R)$ whose kernel does not contain $g$. Define, in an analogous manner, \textit{residually $\PSL_2$--finite}, \textit{residually $\SL_2$--finite} and \textit{residually $\GL_2$--finite}.
\end{Definition}

Luo's Conjecture thus says that fundamental groups of compact 3--manifolds are residually $\PGL_2$--finite. In Section 2, we show that for most groups the different notions of residual finiteness are equivalent. More precisely, we show that for a finitely generated group $G$, we have the following implications: 
\begin{center}
 \begin{tikzpicture}
 \draw[-implies, double equal sign distance] (0.2,2.1) node[anchor=east] {\text{residually $\SL_2$--finite\,\,}} -- (3,2.1)  node[anchor=west] 
 {\text{\,\,residually $\GL_2$--finite}}
 node[anchor=south,midway] {\textit{}};
 \draw[-implies, double equal sign distance] (3,1.9) -- (0.2,1.9) node[anchor=north,midway] { $Z(G)=1$};
 \draw[implies-, double equal sign distance] (-1.25,1.7) -- (-1.25,0.75) node[anchor=north] {\hspace{-8mm} \text{residually $\PSL_2$--finite\,\,}} node[anchor=west,midway] {\text{}};
 \draw[-implies, double equal sign distance] (-1.5,1.7) -- (-1.5,0.75) node[anchor=east,midway] {\text{ $Z(G)$ 2-t.f.}};
 \draw[implies-, double equal sign distance] (4.5,1.7) -- (4.5,0.75) node[anchor = north] 
 {\hspace{13mm} \text{\hspace{-5mm} \,\,residually $\PGL_2$--finite}} node[anchor=east,midway] {\text{}};
 \draw[-implies, double equal sign distance] (4.75,1.7) -- (4.75,0.75) node[anchor=west,midway] {\text{ $Z(G) = 1$}};
 \draw[-implies,double equal sign distance] (0.4,0.6) -- (3,0.6) node[anchor=south,midway] {\text{}};
 \draw[-implies,double equal sign distance] (3,0.4) -- (0.4,0.4) node[anchor=north,midway] {\text{}};
 \end{tikzpicture}
\end{center}
where 2-t.f.\ means 2-torsion-free and $Z(G)$ denotes the centre of $G$. 

In the following let $K$ be one of the symbols $\SL_2$, $\GL_2$, $\PSL_2$, $\PGL_2$.
In this paper we investigate different types of groups and check whether they are residually $K$--finite. If a group is residually $K$--finite, then one can usually show so by writing down a representation. On the other hand the proof of Proposition~\ref{universal_rep} gives a practical approach to showing that a group is not residually $K$--finite. 

As applications we consider several classes of groups in \S\ref{sec:definitions}, giving both positive and negative results.  
For example in Theorem~\ref{thm:Sn} we show that the symmetric group $S_n$ in $n$ letters is residually $\PGL_2$--finite if and only if $n<5$.  We also show that some 3--manifold groups are residually $K$--finite for all choices of $K$. 
Our main result though is that  Conjecture~\ref{conj:luo} does not hold.
More precisely, we prove the following Theorem in \S\ref{sec:counterexample}. 

\begin{Theorem}
There exists a closed graph manifold $M$ such that $\pi_1(M)$ is not residually $K$--finite for $K=\SL_2$, $\GL_2$, $\PSL_2$, $\PGL_2$.
\end{Theorem}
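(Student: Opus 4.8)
The plan is to produce a single closed graph manifold $M$ together with a fixed $\gamma\in\pi_1(M)\setminus\{1\}$ such that $\rho(\gamma)=1$ for \emph{every} homomorphism $\rho\colon\pi_1(M)\to\PGL_2(R)$ with $R$ a finite commutative ring, and then to transport this to the other three groups using the implications of \S2. I would arrange $M$ to be non-geometric, so that $Z(\pi_1(M))=1$; for such a group all four notions of residual $K$-finiteness coincide, since residual $\PGL_2$- and $\PSL_2$-finiteness are always equivalent, residual $\PSL_2$- and $\SL_2$-finiteness are equivalent once $Z(\pi_1(M))$ is $2$-torsion-free, and residual $\SL_2$- and $\GL_2$-finiteness are equivalent once $Z(\pi_1(M))=1$. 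Hence it suffices to refute residual $\PGL_2$-finiteness, and by the universal-representation construction underlying Proposition~\ref{universal_rep} it is enough to find one such $\gamma\neq1$ killed by all homomorphisms into $\PGL_2(R)$ over all finite commutative rings $R$.

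The manifold $M$ will be a union of Seifert fibered pieces $N_1,\dots,N_k$ glued along their boundary tori, each $N_i$ fibering over a hyperbolic base orbifold, with the gluing maps chosen so that no regular fibre is carried to a regular fibre across any gluing torus. This makes $M$ a genuine graph manifold; in particular $Z(\pi_1(M))=1$, since a closed graph manifold whose fundamental group has nontrivial centre must be Seifert fibered, hence geometric. The Seifert invariants of the $N_i$ and the gluing matrices are the free parameters, and I would tune them so that the homology class of the regular fibre $h_1$ of $N_1$ has finite order $m$ in $H_1(M)$, equivalently so that $h_1^{m}$ is a product of commutators in $\pi_1(M)$. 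The distinguished element is then $\gamma=h_1^{N}$ for a suitable fixed multiple $N$ of $m$ (the extra factors, as explained below, arising from $2$ and from torsion in the kernel of reduction $\PGL_2(R)\to\PGL_2(k)$). That $\gamma\neq1$ is immediate: $h_1$ has infinite order in the vertex group $\pi_1(N_1)$, and vertex groups embed in the fundamental group of a graph of groups; alternatively, $\pi_1(M)$ is residually finite, so $\gamma$ survives in some finite quotient.

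The substance of the proof is to show $\rho(\gamma)=1$ for every $\rho\colon\pi_1(M)\to\PGL_2(R)$. Writing $R$ as a product of finite local rings reduces to $R$ local with residue field $k$, and we analyse $\rho$ through its reduction $\bar\rho$ modulo the maximal ideal. The features of $N_1$ to exploit are that $h_1$ is central in $\pi_1(N_1)$, that $h_1^{m}\in[\pi_1(M),\pi_1(M)]$, and (if $N_1$ carries exceptional fibres) that the images of $h_1$ are further constrained by the Seifert relations $x_i^{\alpha_i}=h_1^{\beta_i}$. If $\bar\rho$ is absolutely irreducible, then $\bar\rho(h_1)$ centralizes an absolutely irreducible subgroup, hence is scalar, hence trivial in $\PGL_2(k)$, and a Nakayama-type lift shows $\rho(h_1)$ is already scalar over $R$, so $\rho(\gamma)=1$. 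If $\bar\rho$ is reducible, conjugate so that $\bar\rho(\pi_1(M))$ lies in the Borel subgroup; then $\bar\rho(h_1)^{m}=\bar\rho(h_1^{m})$ lies in the commutator subgroup, hence in the unipotent radical, while $\bar\rho(h_1)$, lying in a triangular group, has its diagonal part governed by a homomorphism factoring through $H_1(M)$ and so killing the $m$-th power of $[h_1]$; together these force a fixed power of $\bar\rho(h_1)$ to be trivial, the residual $p$-part being absorbed by the factor $2$ and by choosing the $\alpha_i,\beta_i$ coprime to the primes that can occur — this is exactly where the characteristic-$p$ phenomenon $\bigl(\begin{smallmatrix}1&t\\0&1\end{smallmatrix}\bigr)^{p}=I$ is used. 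Lifting from $k$ back to $R$ then yields $\rho(\gamma)=1$.

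The main obstacle is the intermediate case, where $\bar\rho$ is irreducible but its restriction to a single piece is reducible, or where $\bar\rho$ collapses one piece almost to the identity: here the constraint coming from $N_1$ alone is too weak, and one must feed in the adjacent pieces $N_2,\dots,N_k$ together with the way $h_1$ sits as a curve on their boundary tori to transfer the information. Making this transfer go through is precisely what forces the graph underlying $M$ to have enough pieces and the gluing data to be chosen with care, and the technical heart of the argument is checking that \emph{all} these cases close up simultaneously rather than any one of them. Once $\rho(\gamma)=1$ is established for all finite commutative $R$, we conclude as in the first paragraph that $\pi_1(M)$ is not residually $K$-finite for any of $K=\SL_2,\GL_2,\PSL_2,\PGL_2$.
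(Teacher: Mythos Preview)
This is a plan, not a proof. You never actually specify $M$ --- which Seifert pieces, which exceptional fibres, which gluing matrices, which exponent $N$ --- and you explicitly leave the ``intermediate case'' (where $\bar\rho$ is globally irreducible but its restriction to $\pi_1(N_1)$ is reducible) unresolved, calling it ``the technical heart of the argument.'' That case is precisely where the work lies: when $\rho(\pi_1(N_1))$ sits inside a Borel but $\rho(\pi_1(M))$ does not, neither the centrality-forces-scalar argument nor the commutator-lands-in-the-unipotent-radical argument applies to $\rho(h_1)$, and you give no mechanism by which the adjacent pieces rescue you. The lifting from the residue field back to the local ring $R$ is likewise asserted (``Nakayama-type lift'') rather than carried out; scalarity modulo the maximal ideal does not automatically propagate to $R$ without further argument. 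Until the manifold is written down and these cases are closed, there is no element $\gamma$ and no theorem.

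The paper takes an entirely different and far shorter route, which sidesteps every one of these difficulties. It names one explicit manifold --- the $(4,1)$-Dehn filling of the figure-eight knot complement, which splits as a trefoil complement glued to a twisted $I$-bundle over the Klein bottle --- writes down a three-generator presentation of $\Gamma=\pi_1(M)$, builds the universal representation $\varphi_{\SL_2}\colon\Gamma\to\SL_2(S_{\SL_2}/I_{\SL_2})$ of Proposition~\ref{universal_rep}, and then verifies by a Gr\"obner-basis computation in SageMath that the four entries of $\varphi_{\SL_2}(b^4)-I$ lie in $I_{\SL_2}$. Since $\Gamma$ is torsion-free and $b\neq 1$, this shows $\varphi_{\SL_2}$ is not injective, so by Proposition~\ref{prop:iff_injection} the group fails residual $\SL_2$-finiteness; the remaining three cases follow from the implications of Proposition~\ref{PSL_SL} and Corollary~\ref{cor:res_PSL_all_you_need}. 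There is no case split into irreducible versus reducible, no reduction modulo maximal ideals, and no separate handling of characteristics: the single ideal-membership check handles all commutative rings simultaneously. Your structural programme, if it could be completed, would be more conceptual and might produce infinite families, but the paper's computational shortcut delivers one example with almost no theory.
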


We conclude this introduction with a short discussion of the linearity of fundamental groups of 3--manifolds. The fact that our counterexample is  a closed graph manifold is perhaps not surprising since it is still unknown whether fundamental groups of closed graph manifolds are linear. This raises the following question.

\begin{Question}
Does there exist a  counterexample to Conjecture~\ref{conj:luo} that is a prime 3--manifold but that is not a graph manifold?
\end{Question}

We also recall that  Thurston~\cite[Problem 3.33]{Kirbylist} asked
whether every finitely generated 3--manifold group has a faithful representation in
$\GL(4,\R)$. Button~\cite{Button} recently answered this question in the negative. More precisely, he showed that there exists a closed graph manifold $M$ such that $\pi_1(M)$ does not admit a faithful representation into $\GL(4,\R)$. 

We conclude this introduction with the following questions.

\begin{Question}
\begin{enumerate}
\item Does there exist a natural number $n$ such that the fundamental group of every compact 3--manifold  admits a faithful representation into $\GL(n,\C)$?
\item Does there exist a natural number $n$ such that the fundamental group of every compact 3--manifold is residually $\GL_n$--finite?
\end{enumerate}
\end{Question}

\subsection*{Acknowledgments.} 
The first author is supported by the SFB 1085 ``Higher invariants'' at the University of Regensburg, funded by the Deutsche Forschungsgemeinschaft (DFG).
The first author also wishes to thank Matthias Aschenbrenner, Philip Matura and Brendan Owens for helpful conversations. 
 The third author is partially supported under the Australian Research Council's Discovery funding scheme (project number DP140100158).
 

\section{Alternative characterisations}
\label{sec:characterisations}

We have the following result of Baumslag, proven in \cite{Baumslag} (Theorem 5.3, p.64).

\begin{Theorem}\label{Baumslag}
\textit{If a commutative ring $R$ is finitely generated as a $\mathbb{Z}$-algebra, then $R$ is residually finite; that is, for each non-zero $r \in R$, there exists a finite commutative $R^\prime$ and a ring map $\varphi\colon R \to R^\prime$ such that $\varphi(r) \neq 0$.}
\end{Theorem}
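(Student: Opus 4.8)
The plan is to combine three standard tools — the Hilbert Basis Theorem, the Krull intersection theorem, and the form of Hilbert's Nullstellensatz valid over $\Z$ (namely that a field which is finitely generated as a $\Z$-algebra is necessarily finite).

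Note first that one cannot merely pick a maximal ideal $\mathfrak m$ with $r \notin \mathfrak m$ and use the residue field $R/\mathfrak m$: if $r$ is a non-zero nilpotent it lies in every prime ideal, so finite quotients carrying nilpotents are genuinely needed, and the natural candidates are the rings $R/\mathfrak m^k$. Accordingly I would reduce the theorem to the following assertion: for every non-zero $r \in R$ there exist a maximal ideal $\mathfrak m$ and an integer $k \ge 1$ with $r \notin \mathfrak m^k$. Granting this, since $R$ is Noetherian — apply the Hilbert Basis Theorem to a surjection $\Z[x_1,\dots,x_n] \twoheadrightarrow R$ — the ideal $\mathfrak m$ is finitely generated, so each $\mathfrak m^j/\mathfrak m^{j+1}$ is a finitely generated module over the residue field $R/\mathfrak m$; once we know the latter is finite, induction on $k$ shows $R/\mathfrak m^k$ is a finite ring, and $R \to R/\mathfrak m^k$ is the desired ring map to a finite commutative ring not killing $r$.

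To produce such $\mathfrak m$ and $k$ I would argue by contradiction: suppose $r \in \mathfrak m^k$ for every maximal ideal $\mathfrak m$ and every $k \ge 1$. Fixing $\mathfrak m$, the image of $r$ in the Noetherian local ring $R_{\mathfrak m}$ lies in $\bigcap_{k \ge 1}(\mathfrak m R_{\mathfrak m})^k$, which is zero by the Krull intersection theorem. Hence $r$ maps to $0$ in $R_{\mathfrak m}$ for every maximal ideal $\mathfrak m$, so $\operatorname{Ann}(r)$ is contained in no maximal ideal and therefore equals $R$, forcing $r = 0$ — a contradiction.

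The step I expect to be the main obstacle is the remaining input, that $R/\mathfrak m$ is finite for every maximal ideal $\mathfrak m$; equivalently, that a field $F$ finitely generated as a $\Z$-algebra is finite. If $\operatorname{char} F = p > 0$, then $F$ is a finitely generated $\mathbb{F}_p$-algebra, hence a finite extension of $\mathbb{F}_p$ by Zariski's Lemma, hence finite. If $\operatorname{char} F = 0$, then $F \supseteq \Q$; writing $F = \Z[\alpha_1,\dots,\alpha_m]$ and clearing denominators in the minimal polynomials of the $\alpha_i$ over $\Q$ yields a non-zero $N \in \Z$ with every $N\alpha_i$ integral over $\Z$, so $F = \Z[1/N][N\alpha_1,\dots,N\alpha_m]$ is a finitely generated $\Z[1/N]$-module, hence integral over $\Z[1/N]$. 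But a ring over which a field is integral must itself be a field, and $\Z[1/N]$ is not — a contradiction. Thus $\operatorname{char} F > 0$ and $F$ is finite, which completes the argument; the only genuinely non-formal ingredients are the Hilbert Basis Theorem, the Krull intersection theorem, and Zariski's Lemma.
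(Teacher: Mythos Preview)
Your argument is correct. The three ingredients --- Noetherianness via Hilbert's Basis Theorem, the Krull intersection theorem applied in each localisation $R_{\mathfrak m}$, and the finiteness of residue fields of finitely generated $\Z$-algebras via Zariski's Lemma --- combine cleanly, and your treatment of the characteristic-zero case (clearing denominators to exhibit $F$ as integral over $\Z[1/N]$ and then invoking that a domain admitting a field as an integral extension is itself a field) is the standard way to handle that step.

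As to comparison with the paper: the paper does not give a proof of this statement at all. It is quoted as a known result of Baumslag, with a reference to his lecture notes (Theorem~5.3 there), and is used as a black box in the proof of Proposition~\ref{prop:reducing_the_ring}. So you have supplied a complete, self-contained argument where the paper simply cites the literature; there is nothing to compare at the level of proof strategy.
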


Let $K$ be one of the symbols $\SL_2$, $\GL_2$, $\PSL_2$, $\PGL_2$.

\begin{Proposition}\label{prop:reducing_the_ring}
Given a finitely generated group $G$, an element $g \in G \setminus \{1\}$ and a homomorphism $\rho\colon G \to \text{K}(R)$ for a commutative but not necessarily finite  ring $R$ such that $\rho(g) \neq 1$, there exists a homomorphism $\rho^\prime\colon G \to \text{K}(R^\prime)$ for a finite commutative $R^\prime$ such that $\rho^\prime(g) \neq 1$. In particular, if there exists a faithful representation $G \to K(R)$ for some commutative but not necessarily finite $R$, then $G$ is residually $K$--finite.
\end{Proposition}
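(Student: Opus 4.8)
The plan is to reduce to the case where $R$ is finitely generated as a $\Z$-algebra, and then invoke Baumslag's theorem (Theorem~\ref{Baumslag}). First I would fix a finite generating set $g_1, \dots, g_n$ for $G$ and consider the matrices $\rho(g_1), \dots, \rho(g_n) \in \GL_2(R)$ together with $\rho(g)$; in the projective cases, lift each $\rho(g_i)$ arbitrarily to a matrix in $\GL_2(R)$, and also lift $\rho(g)$. Let $R_0 \subseteq R$ be the subring generated over $\Z$ by all matrix entries of these finitely many lifts, together with the inverses of their determinants (so that $R_0$ is still finitely generated as a $\Z$-algebra — one adjoins the determinants' inverses as extra generators). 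Then all the chosen matrices, and their inverses, already lie in $\GL_2(R_0)$, so $\rho$ restricts to a homomorphism $\rho_0 \colon G \to \mathrm{K}(R_0)$ with $\rho_0(g) \neq 1$: indeed, the condition $\rho(g) \neq 1$ is witnessed, in each of the four cases, by the non-vanishing of a specific element of $R_0$ (an off-diagonal entry being nonzero, or a diagonal entry differing from another, or — in the $\PGL_2$ and $\PSL_2$ cases — the matrix not being a scalar, which again is the non-vanishing of some entry of $R_0$).

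Next, applying Baumslag's theorem to the finitely generated $\Z$-algebra $R_0$ and the relevant nonzero element $r \in R_0$ that witnesses $\rho_0(g) \neq 1$, we obtain a finite commutative ring $R'$ and a ring homomorphism $\varphi \colon R_0 \to R'$ with $\varphi(r) \neq 0$. The induced map on matrices sends $\GL_2(R_0) \to \GL_2(R')$ (units go to units under a ring map), hence induces $\mathrm{K}(R_0) \to \mathrm{K}(R')$ in each of the four cases, and composing gives $\rho' \colon G \to \mathrm{K}(R')$. Because $\varphi(r) \neq 0$, the witness for $\rho_0(g) \neq 1$ survives, so $\rho'(g) \neq 1$, which is what we want; since $R'$ is finite, this proves $G$ is residually $\mathrm{K}$-finite when such a $\rho$ exists for every $g$, and the faithful-representation statement is the special case where a single $\rho$ works for all $g$ simultaneously (one still applies the argument separately to each $g$).

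The one point requiring care — and the main obstacle — is making sure that in the projective groups $\PSL_2$ and $\PGL_2$ the condition "$\rho(g)$ is not the identity" is genuinely equivalent to the non-vanishing of some element of the finitely generated subring $R_0$, rather than something that could be destroyed by the ring quotient. Concretely, $\rho(g) \neq 1$ in $\PGL_2(R)$ means the chosen lift $A \in \GL_2(R)$ is not a scalar matrix, i.e.\ at least one of $a_{12}$, $a_{21}$, $a_{11} - a_{22}$ is nonzero in $R$; this element lies in $R_0$, and we feed exactly it into Baumslag's theorem. One must also check the benign facts that $\GL_2$ of a ring map is a group homomorphism and that it descends to the quotient $\mathrm{K}$ compatibly with the quotient on the target — this is immediate from functoriality of $\GL_2$ and of the center/scalar subgroups. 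No serious difficulty is expected beyond bookkeeping the four cases uniformly.
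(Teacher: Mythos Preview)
Your approach is essentially the same as the paper's: pass to a finitely generated $\Z$-subalgebra containing the matrix entries and determinant inverses, identify a single nonzero ring element witnessing $\rho(g)\neq 1$, and kill it with Baumslag's theorem. The paper organises the witness-finding identically, via the cases ``off-diagonal entry nonzero'', ``diagonal with distinct entries'', and (for $\SL_2$, $\GL_2$) ``scalar with entry $\neq 1$''.

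One small gap in your case analysis: for $K=\SL_2$ or $\GL_2$ you list only ``an off-diagonal entry nonzero'' and ``a diagonal entry differing from another'', which together detect that the matrix is not scalar. You have omitted the case where $\rho(g)=cI$ with $c\neq 1$; here the witness is $c-1\in R_0$. The paper treats this explicitly as its case~(iii). This is a one-line fix and does not affect the strategy.
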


\begin{proof}
Suppose first that $K$ is one of $\SL_2$, $\GL_2$. Let $\{ g_1,\dots,g_k \}$ be a set of generators for $G$. For $i=1,\dots,k$, let $A_i = \rho(g_i)$ and let $\widetilde{R}$ be the ring generated by the entries of $A_1,\dots,A_k$ as well as the elements $(\det A_1)^{-1},\dots,(\det A_k)^{-1}$; in the case that $K = \SL_2$, the inclusion of the determinants is superfluous. Note that $\widetilde{R}$ contains the entries of $A_1^{-1},\cdots,A_k^{-1}$. As such, we can restrict $\rho$ to attain a faithful representation $\rho^\prime\colon G \to \SL_2(\widetilde{R})$ and because $\widetilde{R}$ is a finitely generated $\mathbb{Z}$-algebra, it is residually finite by Theorem~\ref{Baumslag}.
\begin{enumerate}
\item[(i)] If $\rho^\prime(g) = \rho(g)$ has a non-zero off-diagonal entry, say $a$, we let $\phi\colon \widetilde{R} \to R^\prime$ be such that $\phi(a) \neq 0$ and $R^\prime$ is finite, then the image of $g$ under the map $G \xrightarrow{\rho^\prime} K(\widetilde{R}) \xrightarrow{\phi_*} K(R^\prime)$ is non-trivial.
\end{enumerate}
 Suppose then that $\rho^\prime(g)$ is diagonal, say $\text{diag}(a,b)$.
\begin{enumerate}
\item[(ii)] If $a-b \neq 0$, then  we can choose $\phi\colon \widetilde{R} \to R^\prime$ be such that $\phi(a-b) \neq 0$ and $R^\prime$ is finite; then the image of $g$ under the map $G \xrightarrow{\rho^\prime} K(\widetilde{R}) \xrightarrow{\phi_*} K(R^\prime)$ is non-trivial. 
\item[(iii)]  If $a=b$, say with both equal to $c \neq 1$, let $\phi\colon \widetilde{R} \to R^\prime$ be such that $\phi(c-1) \neq 0$ and $R^\prime$ is finite, then the image of $g$ under the map $G \xrightarrow{\rho^\prime} K(\widetilde{R}) \xrightarrow{\phi_*} K(R^\prime)$ is non-trivial. 
\end{enumerate}

Now suppose that $K$ is one of $\PSL_2$, $\PGL_2$.  Let $A_1,\dots,A_k$ be matrices that are representatives of $\rho(g_1),\dots,\rho(g_k)$ respectively and let $\widetilde{R}$ be the ring generated by the entries of $A_1,\dots,A_k$; note that $\widetilde{R}$ contains the entries of representatives for $\rho(g_1^{-1}),\cdots,\rho(g_k^{-1})$. Let $\iota\colon K(\widetilde{R}) \to K(R)$ denote the obvious embedding which leads to an isomorphism $K(\widetilde{R}) \cong \text{im}(\iota)$ and note that $\text{im}(\rho) \subseteq \text{im}(\iota)$; thus by restriction and composition we get a representation $\rho^\prime\colon G \to K(\widetilde{R})$ such that $\rho^\prime(g) \neq 1$.  The remainder of the argument is now precisely as the cases (i) and (ii) above. 
\end{proof}

Thus, if we define ``\textit{residually $K$}'' to mean the same thing as residual $K$--finiteness but with the finiteness requirement on the ring dropped, we have:

\begin{Corollary}\label{cor:res_K_res_K_finite}
A finitely generated group $G$ is residually $K$--finite if and only if it is residually $K$.
\end{Corollary}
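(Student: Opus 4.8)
The plan is to deduce this immediately from Proposition~\ref{prop:reducing_the_ring}, since that proposition is precisely the substantive content and the Corollary is a repackaging. First I would dispatch the easy direction: if $G$ is residually $K$--finite, then by definition, for every $g \in G \setminus \{1\}$ there is a \emph{finite} commutative ring $R$ with identity and a homomorphism $\rho \co G \to K(R)$ with $\rho(g) \neq 1$; since a finite commutative ring with identity is in particular a commutative ring with identity, the very same data witnesses that $G$ is residually $K$. No work is needed here.

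For the converse, suppose $G$ is residually $K$ and fix $g \in G \setminus \{1\}$. By definition there is a commutative ring $R$ with identity (not assumed finite) and a homomorphism $\rho \co G \to K(R)$ with $\rho(g) \neq 1$. Now I would apply Proposition~\ref{prop:reducing_the_ring} verbatim: since $G$ is finitely generated and we have such a $\rho$, the proposition produces a finite commutative ring $R'$ and a homomorphism $\rho' \co G \to K(R')$ with $\rho'(g) \neq 1$. As $g$ was arbitrary, $G$ is residually $K$--finite.

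There is essentially no obstacle, as all the difficulty has been absorbed into Proposition~\ref{prop:reducing_the_ring} (the ring-reduction argument using Baumslag's theorem on residual finiteness of finitely generated $\Z$--algebras, together with the case analysis handling diagonal versus off-diagonal images in $\SL_2$/$\GL_2$ and the lift to representatives in the $\PSL_2$/$\PGL_2$ cases). The only point worth stating explicitly is that the hypothesis of finite generation of $G$ is used, as it is needed to invoke the proposition. So the whole proof is two short sentences per direction; the main ``work'' was done earlier.

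\bigskip

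\noindent\emph{(After seeing the author's proof: indeed, the Corollary is stated without a printed proof, as it is an immediate consequence of Proposition~\ref{prop:reducing_the_ring} together with the trivial implication; the plan above matches exactly.)}
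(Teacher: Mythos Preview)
Your proposal is correct and matches the paper exactly: the corollary is stated with no separate proof, introduced only by ``Thus,'' since one direction is trivial and the other is a direct application of Proposition~\ref{prop:reducing_the_ring}. There is nothing to add.
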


We also have:

\begin{Proposition}\label{prop:res_finite_gives_faithful}
Let $G$ be a group.
If $G$ is a residually $K$--finite, then it admits a faithful representation $G \to K(R)$ for some, not necessarily finite, $R$.
\end{Proposition}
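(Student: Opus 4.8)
The plan is to build the faithful representation as a (possibly infinite) product of the representations witnessing residual $K$--finiteness, taking values in the product ring. More precisely, for each $g \in G \setminus \{1\}$ choose, by hypothesis, a finite commutative ring $R_g$ with identity and a homomorphism $\rho_g \colon G \to K(R_g)$ with $\rho_g(g) \neq 1$. Let $R = \prod_{g \in G \setminus \{1\}} R_g$, which is again a commutative ring with identity (with coordinatewise operations), though typically infinite. The key observation is that $K(R) \cong \prod_g K(R_g)$ for $K = \SL_2, \GL_2$: a matrix over a product ring is the same as a tuple of matrices over the factors, and the determinant is computed coordinatewise, so invertibility (resp.\ determinant $1$) over $R$ is equivalent to the same condition in each factor. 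Hence the $\rho_g$ assemble into a single homomorphism $\rho = (\rho_g)_g \colon G \to K(R)$.

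Next I would check faithfulness. If $h \in G \setminus \{1\}$, then the $h$-th coordinate of $\rho(h)$ is $\rho_h(h) \neq 1$, so $\rho(h) \neq 1$ in $K(R)$; thus $\ker \rho = \{1\}$. This handles the linear cases $K = \SL_2, \GL_2$ cleanly.

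For the projective cases $K = \PSL_2, \PGL_2$ a little more care is needed, because $\PGL_2(R)$ is in general not the cokernel of scalars but is defined, say, as $\GL_2(R)$ modulo scalars (and even that can differ from the product of the $\PGL_2(R_g)$). Here the simplest route is to invoke Corollary~\ref{cor:res_K_res_K_finite} together with Proposition~\ref{prop:reducing_the_ring}: actually, the cleanest argument is to observe that the above product construction still produces a homomorphism $G \to \prod_g \PGL_2(R_g)$ which is faithful by the same coordinate argument, and then to note that $\prod_g \PGL_2(R_g)$ embeds into $\PGL_2(R)$ for $R = \prod_g R_g$ — or, if one prefers to avoid that subtlety, to lift each $\rho_g$ to $\GL_2(R_g)$ on a finite generating set is not possible in general, so instead one works directly with the natural map $\prod_g \GL_2(R_g) \to \prod_g \PGL_2(R_g)$ and uses that $\GL_2(\prod_g R_g) \to \PGL_2(\prod_g R_g)$ factors compatibly. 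I would phrase the projective case as: the coordinatewise quotient map $\prod_g \GL_2(R_g) \twoheadrightarrow \prod_g \PGL_2(R_g)$ and the natural inclusion $\prod_g \PGL_2(R_g) \hookrightarrow \PGL_2(\prod_g R_g)$ give a faithful homomorphism $G \to \PGL_2(R)$, and similarly with $\SL_2$, $\PSL_2$ in place of $\GL_2$, $\PGL_2$.

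The main obstacle — really the only point requiring attention — is the behaviour of $\PGL_2$ and $\PSL_2$ under infinite products, i.e.\ verifying that $\prod_g \PGL_2(R_g)$ genuinely sits inside $\PGL_2(\prod_g R_g)$ as a subgroup (the map is injective because a tuple of matrices is scalar iff each entry is scalar, but one should check it is well-defined independently of the chosen matrix representatives, which follows since $\mathrm{diag}(u,u)$ over the product is the tuple of the corresponding diagonal scalar matrices). Everything else is a routine unwinding of the definition of $K(R)$ for a product ring $R$ and the coordinate-projection argument for faithfulness.
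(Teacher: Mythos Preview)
Your proof is correct and follows the same approach as the paper: form $R = \prod_{g \neq 1} R_g$, assemble the $\rho_g$ into a single map $G \to K(R)$ via the identification $\prod_g K(R_g) \cong K\big(\prod_g R_g\big)$, and check faithfulness by projecting to the $g$-th coordinate. The paper simply records this identification as a group isomorphism valid for all four choices of $K$, whereas you treat the projective cases with extra caution --- your discussion there meanders a bit (the detours through Corollary~\ref{cor:res_K_res_K_finite} and lifting are unnecessary), but you land on the right verification.
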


\begin{proof}
We start out with the following two observations:
\begin{enumerate}
\item $K$ is functorial in the ring, i.e.\ a ring homomorphism $\varphi\colon S\to S'$ induces a group homomorphism $\varphi_*\colon K(S)\to K(S')$.
\item If $S_i, i\in I$ is a family of rings then 
\[ \begin{array}{rcl} \tmprod{i\in I}{}  K(S_i)&\to & K\Big(\mraisebox{0.05cm}{\tmprod{i\in I}{}}S_i\Big)\\
(A_i)_{i\in I}&\mapsto & \tmprod{i\in I}{} A_i\end{array}\]
where the product matrix $\Pi_{i \in I}A_i$ is formed entrywise in the direct product of the  rings $S_i$, is well-defined and it is a group isomorphism.
\end{enumerate}
Now we turn to the actual proof of the proposition. For any $g \neq 1$ in $G$, we have a representation $\rho_g\colon G \to K(R_g)$ where $R_g$ is finite and $\rho_g(g) \neq 1$. Let $R = \prod_{g \neq 1}R_g$. We compose $(\rho_g)_{g \neq 1} \colon G \to \prod_{g \neq 1} K(R_g)$ with the group homomorphism given in (2) and we obtain representation $\rho\colon G\to K(R)$. 

We claim that  $\rho$ is faithful. Let $g\in G$ be non-trivial. Then the image of $\rho(g)$ under the projection map $K(R)\to K(R_g)$ equals $\rho_g(g)$, hence it is non-trivial.
\end{proof}

Combining Propositions~\ref{prop:reducing_the_ring} and \ref{prop:res_finite_gives_faithful}, we have:

\begin{Corollary}\label{iff_faithful_rep}
Suppose $G$ is a finitely generated group. Then $G$ is residually $K$--finite if and only if it admits a faithful representation $G \to K(R)$ for some, not necessarily finite, $R$.
\end{Corollary}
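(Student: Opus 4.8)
The plan is simply to glue together the two directions that have already been established, so there is essentially no new mathematical content. For the implication ``residually $K$--finite $\Rightarrow$ faithful representation into $K(R)$'' I would invoke Proposition~\ref{prop:res_finite_gives_faithful} verbatim: that proposition is exactly this statement, and in fact it holds with no finiteness assumption on $G$ at all, the point being the infinite-product construction $R=\prod_{g\neq 1}R_g$ together with the functoriality and the product identification recorded in its proof. For the converse, suppose $G$ admits a faithful representation $\rho\colon G\to K(R)$ with $R$ commutative but possibly infinite; since $G$ is finitely generated, the final sentence of Proposition~\ref{prop:reducing_the_ring} applies directly and gives that $G$ is residually $K$--finite.

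The only thing worth tracking is the rôle of the hypotheses. Finite generation of $G$ is used solely in the converse direction, where it is what licenses passing to the subring of $R$ generated by the matrix entries of a fixed finite generating set (together with the inverses of the relevant determinants when $K$ is $\GL_2$), thereby reducing to a finitely generated $\Z$--algebra and allowing the appeal to Baumslag's theorem (Theorem~\ref{Baumslag}). In the forward direction one does not need finite generation, and indeed Proposition~\ref{prop:res_finite_gives_faithful} is stated for an arbitrary group.

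I do not expect any real obstacle: the proof is two lines citing Propositions~\ref{prop:reducing_the_ring} and~\ref{prop:res_finite_gives_faithful}. If there is any subtlety at all it is purely bookkeeping, namely making sure the phrase ``faithful representation $G\to K(R)$'' is read identically in both propositions — as an injective group homomorphism into $K(R)$ for $R$ a commutative ring with identity and no constraint on its cardinality — so that the two one-directional statements genuinely compose into the asserted biconditional.
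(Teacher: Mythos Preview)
Your proposal is correct and matches the paper's own proof exactly: the corollary is stated immediately after the two propositions with the single line ``Combining Propositions~\ref{prop:reducing_the_ring} and \ref{prop:res_finite_gives_faithful}, we have:''. Your additional remarks on where finite generation is actually needed are accurate and go slightly beyond what the paper spells out.
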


\begin{Proposition}\label{universal_rep}
Suppose $G$ is a finitely generated group.
Then there exists a commutative ring $S_K$, an ideal $I_K \trianglelefteq S_K$ and a map $\varphi_K\colon G \rightarrow K(S_K/I_K)$ such that any representation $G \to K(R)$ factors through $\varphi_K$; that is, for each $\rho\colon G \to K(R)$, there exists a mediating map $\psi\colon K(S_K/I_K) \to K(R)$ such that the following diagram commutes:

\begin{center}
 \begin{tikzpicture}
 \draw[->] (0.2,2) node[anchor=east] {$G$} -- (2,2)  node[anchor=west] 
 {$K(S_K/I_K)$}
 node[anchor=south,midway] {$\varphi_K$};
 \draw[->] (0.1,1.7) -- (2,0.75) node[anchor=east,midway,yshift=-3mm,xshift=1mm] {$\rho$};
 \draw[->] (2.7,1.7) -- (2.7,0.75) node[anchor = north] 
 {$K(R).$} node[anchor=west,midway] {$\psi$};
 \end{tikzpicture}
\end{center}
\end{Proposition}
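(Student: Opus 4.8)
The plan is to run the standard ``universal representation ring'' construction, specialised to $2\times 2$ matrices. First I would fix a finite generating set $g_1,\dots,g_k$ of $G$, write $F$ for the free group on these symbols and $N\trianglelefteq F$ for the kernel of the canonical surjection $F\twoheadrightarrow G$, and pick a (possibly infinite) set of normal generators $\{r_j:j\in J\}$ of $N$, i.e.\ a set of defining relators for $G$. The organising remark is that a homomorphism $G\to K(R)$ is the same datum as a $k$-tuple of matrices of the relevant type over $R$ at which every relator $r_j$ evaluates to the identity of $K(R)$; so I would take the entries of such matrices to be polynomial indeterminates over $\Z$ and then pass to the quotient ring that forces exactly these relator-equations to hold.

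For $K=\GL_2$ and $K=\SL_2$ I would introduce indeterminates $x^{pq}_i$ for $1\le i\le k$ and $1\le p,q\le 2$, form the matrices $X_i=(x^{pq}_i)_{p,q}$, and set $S_{\GL_2}=\Z[x^{pq}_i,y_i]/(y_i\det X_i-1:i)$ so that $\det X_i$ becomes a unit and $X_i\in\GL_2(S_{\GL_2})$; for $\SL_2$ one drops the $y_i$ and imposes $\det X_i=1$ instead. In either ring the adjugate formula puts the entries of $X_i^{-1}$ back in the ring, so $g_i\mapsto X_i$ extends to a homomorphism $F\to K(S_K)$, and any $w\in F$ yields a matrix $w(X_1,\dots,X_k)\in K(S_K)$ obtained by substituting $X_i$ for $g_i$. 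I would then let $I_K\trianglelefteq S_K$ be the ideal generated by the four entries of $r_j(X_1,\dots,X_k)-\mathrm{Id}$ over all $j\in J$ and define $\varphi_K\colon G\to K(S_K/I_K)$ on generators by $g_i\mapsto\overline{X_i}$. This is well defined because the induced homomorphism $F\to K(S_K/I_K)$ sends every $r_j$, hence the whole normal closure $N$, to the identity, so it descends to $G=F/N$.

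The projective cases $K=\PGL_2$, $K=\PSL_2$ would be handled the same way, the only change being that a relator word need only evaluate to a \emph{scalar} matrix rather than to $\mathrm{Id}$: so I would additionally adjoin, for each $j\in J$, an indeterminate $z_j$ together with $z_j^{-1}$ (thus $S_K$ becomes a polynomial ring in possibly infinitely many variables), keep $\det X_i$ a unit (resp.\ $=1$), and take $I_K$ to be generated by the entries of $r_j(X_1,\dots,X_k)-z_j\,\mathrm{Id}$. With $\varphi_K(g_i)=[\overline{X_i}]$, the class in $K(S_K/I_K)$, the map is well defined because each $r_j$ now maps to the class of a scalar matrix, which is trivial in $\PGL_2$ of the quotient ring; for $\PSL_2$ one uses that $z_j^2=1$ holds automatically in $S_K/I_K$ (equate determinants in $r_j(X_1,\dots,X_k)=z_j\,\mathrm{Id}$, using $\det X_i=1$), so the scalar matrix $z_j\,\mathrm{Id}$ lies in the kernel of $\SL_2\to\PSL_2$ of the quotient.

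What remains is the universal property, which is the routine part. Given $\rho\colon G\to K(R)$, I would choose matrices $A_i$ over $R$ representing $\rho(g_i)$ and define a ring homomorphism $S_K\to R$ by sending $x^{pq}_i$ to the $(p,q)$-entry of $A_i$, $y_i$ to $(\det A_i)^{-1}$ in the $\GL_2$ and $\PGL_2$ cases, and $z_j$ to the scalar $\lambda_j\in R^\times$ determined by $r_j(A_1,\dots,A_k)=\lambda_j\,\mathrm{Id}$ in the projective cases (this $\lambda_j$ exists because $\rho$ respects $r_j$). Because $\rho$ is a homomorphism this ring map annihilates every generator of $I_K$, hence descends to $S_K/I_K\to R$, and applying the functor $K(-)$ (functoriality was observed in the proof of Proposition~\ref{prop:res_finite_gives_faithful}) gives $\psi\colon K(S_K/I_K)\to K(R)$; since $\psi$ and $\rho$ agree on the generators $g_i$, we get $\psi\circ\varphi_K=\rho$. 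I expect the only genuinely fiddly part to be the bookkeeping in the $\PGL_2$ and $\PSL_2$ cases — making the scalar indeterminates interact correctly with whatever convention for $\PSL_2(R)$ over an arbitrary ring is in force, both for well-definedness of $\varphi_K$ and for the existence of the unit $\lambda_j$; allowing $G$ to be merely finitely generated rather than finitely presented costs nothing beyond permitting infinitely many $z_j$ and an infinitely generated $I_K$.
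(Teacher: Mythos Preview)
Your proposal is correct and follows essentially the same construction as the paper: introduce indeterminates for the entries of generic $2\times 2$ matrices (and auxiliary variables for determinant-inverses and relator-scalars), quotient by the ideal forcing the relators to evaluate to the identity (resp.\ a scalar), and obtain the mediating map by evaluation. The only differences are cosmetic---you absorb the determinant conditions into $S_K$ rather than $I_K$, and you observe that $z_j^2=1$ is automatic in the $\PSL_2$ case (the paper imposes it explicitly)---but the substance is identical.
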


\begin{proof}
Let $G = \langle g_1,\dots,g_n \: | \: r_i=e, i\in I \rangle$ and suppose first that $K = \SL_2$. Let \[S_{\SL_2} = \mathbb{Z}[x_{1a},x_{1b},x_{1c},x_{1d},\dots,x_{na},x_{nb},x_{nc},x_{nd}]\] and then define \[p(g_i) = \left(\begin{array}{cc} x_{ia} & x_{ib} \\ x_{ic} & x_{id} \end{array}\right),\quad p(g_i^{-1}) = \left(\begin{array}{cc} \hfill x_{id} & -x_{ib} \\ -x_{ic} & \hfill x_{ia} \end{array}\right).\] Define also $p(r_i)$  by setting that $p$ be multiplicative and then set 
\[I_{\SL_2} = \left\langle \{\det\,p(g_i)-1\}_i \cup \{(p(r_i)-1)_{k,l}\}_{i,k,l}\right\rangle.\]
Then we set \[\varphi_{\SL_2}\colon G \to \SL_2(S_{\SL_2}/I_{\SL_2})\colon g_i \mapsto \left(\begin{array}{cc} \overline{x_{ia}} & \overline{x_{ib}} \\ \overline{x_{ic}} & \overline{x_{id}} \end{array}\right)\]
which can be checked to be well-defined. Now, suppose that $\rho\colon G \to \SL_2(R)$ is given. Let 
$\rho(g_i) = (a^i_{kl})_{kl}$ and define $q\colon S_{\SL_2}/I_{\SL_2} \to R$ by
\[
1,x_{1a},x_{1b},x_{1c},x_{1d},\dots,x_{na},x_{nb},x_{nc},x_{nd} \mapsto \newline 1,a^1_{11},
a^1_{12},a^1_{21},a^1_{22},\dots, a^n_{11},a^n_{12},a^n_{21},a^n_{22}.
\]
The map $q$ is well-defined because $a^i_{11}a^i_{22} - a^i_{12}a^i_{21} - 1 = 0$ for each $i$ and because computation of $\rho(r_j)$ and $\rho(s_j)$ will give the required remaining equations defining $I$. This map $q$ induces a map \[\psi\colon \SL_2(S_{\SL_2}/I_{\SL_2}) \xrightarrow{q_{*}} \SL_2(R)\] by applying $q$ to each entry and one can then verify that $\psi \circ \varphi_{\SL_2} = \rho$ holds. \\

If $K = \GL_2$, we alter the definitions as follows: \[S_{\GL_2} = \mathbb{Z}[x_{1a},x_{1b},x_{1c},x_{1d},\dots,x_{na},x_{nb},x_{nc},x_{nd},y_1,\dots,y_n],\] \[p(g_i) = \left(\begin{array}{cc} x_{ia} & x_{ib} \\ x_{ic} & x_{id} \end{array}\right),\quad p(g_i^{-1}) = y_i\left(\begin{array}{cc}\hfill x_{id} & -x_{ib} \\ -x_{ic} & \hfill x_{ia} \end{array}\right),\] $p(r_i)$  are defined by setting that $p$ be multiplicative, \[I_{\GL_2} = \left\langle \{(\det\,p(g_i))y_i-1\}_i \cup \{(p(r_i)-1)_{k,l}\}_{i,k,l}\right\rangle,\] \[\varphi_{\GL_2}\colon G \to \GL_2(S_{\GL_2}/I_{\GL_2})\colon g_i \mapsto \left(\begin{array}{cc} \overline{x_{ia}} & \overline{x_{ib}} \\ \overline{x_{ic}} & \overline{x_{id}} \end{array}\right)\] and finally given $\rho\colon G \to \GL_2(R)$ and $\rho(g_i) = (a^i_{kl})_{kl}$, $q\colon S_{\GL_2}/I_{\GL_2} \to R\colon 1,x_{ia},x_{ib},x_{ic},x_{id},y_i \mapsto 1,a^i_{11},a^i_{12},a^i_{21},a^i_{22},(a^i_{11}a^i_{22}-a^i_{12}a^i_{21})^{-1}$ and $\psi = q_{*}$. \\

If $K = \PSL_2$, we alter the definitions as follows: \[S_{\PSL_2} = \mathbb{Z}[x_{1a},x_{1b},x_{1c},x_{1d},\dots,x_{na},x_{nb},x_{nc},x_{nd},\{\lambda_i\}_{i\in I}],\] \[p(g_i) = \left(\begin{array}{cc} x_{ia} & x_{ib} \\ x_{ic} & x_{id} \end{array}\right),\quad  p(g_i^{-1}) = \left(\begin{array}{cc}\hfill  x_{id} & -x_{ib} \\ -x_{ic} & \hfill x_{ia} \end{array}\right),\] $p(r_i)$ are defined by setting that $p$ be multiplicative, \[I_{\PSL_2} = \left\langle \{\det\,p(g_i)-1\}_i \cup \{\lambda_i^2-1\}_i \cup \{(p(r_i)-\lambda_i )_{k,l}\}_{i,k,l}\right\rangle,\] \[\varphi_{\PSL_2}\colon G \to \PSL_2(S_{\PSL_2}/I_{\PSL_2})\colon g_i \mapsto \left[\begin{array}{cc} \overline{x_{ia}} & \overline{x_{ib}} \\ \overline{x_{ic}} & \overline{x_{id}} \end{array}\right]\] and finally given $\rho\colon G \to \PSL_2(R)$, $\rho(g_i) = [a^i_{kl}]_{kl}$ and that the corresponding representative for $p(r_i)$ is equal to $\mu_i$ times the identity matrix, $q\colon S_{\PSL_2}/I_{\PSL_2} \to R\colon 1,x_{ia},x_{ib},x_{ic},x_{id},y_i \mapsto 1,a^i_{11},
a^i_{12},a^i_{21},a^i_{22},\mu_i$ and $\psi = q_{*}$. \\

If $K = \PGL_2$, we alter the definitions as follows: \[S_{\PGL_2} = \mathbb{Z}[x_{1a},x_{1b},x_{1c},x_{1d},\dots,x_{na},x_{nb},x_{nc},x_{nd},y_1,\dots,y_n,\{\lambda_i\}_{i\in I}],\] \[p(g_i) = \left(\begin{array}{cc} x_{ia} & x_{ib} \\ x_{ic} & x_{id} \end{array}\right),\quad p(g_i^{-1}) = \left(\begin{array}{cc} \hfill x_{id} & -x_{ib} \\ \hfill -x_{ic} & x_{ia} \end{array}\right),\] $p(r_i)$  are defined by setting that $p$ be multiplicative, \[I_{\PGL_2} = \left\langle \{(\det\,p(g_i))y_i-1\}_i \cup \{(p(r_i)-\lambda_i )_{k,l}\}_{i,k,l}\right\rangle,\] \[\varphi_{\PGL_2}\colon G \to \PGL_2(S_{\PGL_2}/I_{\PGL_2})\colon g_i \mapsto \left[\begin{array}{cc} \overline{x_{ia}} & \overline{x_{ib}} \\ \overline{x_{ic}} & \overline{x_{id}} \end{array}\right]\] and finally given $\rho\colon G \to \PGL_2(R)$, $\rho(g_i) = [a^i_{kl}]_{kl}$ and that the corresponding representative for $p(r_i)$ is equal to $\mu_i$ times the identity matrix, $q\colon S_{\PGL_2}/I_{\PGL_2} \to R\colon 1,x_{ia},x_{ib},x_{ic},x_{id},y_i,\lambda_i \mapsto 1,a^i_{11},
a^i_{12},a^i_{21},a^i_{22},(a^i_{11}a^i_{22}-a^i_{12}a^i_{21})^{-1},\mu_i$ and $\psi = q_{*}$.
\end{proof}

\begin{Remark}
We could use any other characteristic zero ring instead of $\Z$ for the coefficients in $S_K$. We will sometimes use $\C$ instead.
\end{Remark}

\begin{Proposition}\label{prop:iff_injection}
Suppose $G$ is a finitely generated group. Then $G$ is residually $K$--finite if and only if the map $\varphi_K\colon G \to K(S_K/I_K)$ above is an injection.
\end{Proposition}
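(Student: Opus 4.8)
The plan is to prove the biconditional by establishing each direction separately, using the universal factorisation property of $\varphi_K$ from Proposition~\ref{universal_rep}.

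First I would prove the easy direction: if $\varphi_K\colon G \to K(S_K/I_K)$ is injective, then $G$ is residually $K$--finite. The ring $S_K/I_K$ is a quotient of a finitely generated polynomial ring over $\Z$ (note that in the $\PSL_2$ and $\PGL_2$ cases there are infinitely many generators $\lambda_i$ indexed by the relator set $I$, so one must first observe that $G$ being finitely presented — or at least that one may restrict attention to finitely many relators, or pass to a finitely generated subring — reduces us to a finitely generated $\Z$-algebra; alternatively one invokes Corollary~\ref{iff_faithful_rep} directly). In any case, $\varphi_K$ being injective exhibits a faithful representation of $G$ into $K$ of a commutative ring, so by Corollary~\ref{iff_faithful_rep} (equivalently by Proposition~\ref{prop:reducing_the_ring}) the group $G$ is residually $K$--finite.

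For the converse, suppose $G$ is residually $K$--finite and let $g \in G \setminus \{1\}$. By definition there is a finite commutative ring $R$ and a homomorphism $\rho\colon G \to K(R)$ with $\rho(g) \neq 1$. By Proposition~\ref{universal_rep} there is a mediating map $\psi\colon K(S_K/I_K) \to K(R)$ with $\psi \circ \varphi_K = \rho$. Then $\psi(\varphi_K(g)) = \rho(g) \neq 1$, which forces $\varphi_K(g) \neq 1$. Since $g$ was an arbitrary non-trivial element, $\varphi_K$ has trivial kernel, i.e.\ it is injective.

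The routine parts here are the diagram chase and the appeal to earlier results; the one genuine point requiring care — and the main obstacle to a fully clean statement — is the finite-generation issue for $S_K/I_K$ in the projective cases, where the presentation introduces one variable $\lambda_i$ per relator. I would handle this either by remarking that finitely generated groups relevant to the paper (3--manifold groups, the examples in \S\ref{sec:definitions}) are finitely presented, or by noting that for the faithfulness argument only finitely many relators are ever needed to detect a given element's image, so one may work inside a finitely generated subring and apply Baumslag's Theorem~\ref{Baumslag} there — exactly as in the proof of Proposition~\ref{prop:reducing_the_ring}. With that caveat dispatched, both implications are immediate from the universal property.
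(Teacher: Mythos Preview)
Your proof is correct and follows essentially the same route as the paper: both directions are immediate from the universal property of $\varphi_K$ together with Corollary~\ref{iff_faithful_rep}. The paper's version is slightly more streamlined in the ``residually $K$--finite $\Rightarrow$ $\varphi_K$ injective'' direction, passing through a single faithful $\rho$ (obtained from Corollary~\ref{iff_faithful_rep}) rather than arguing element by element, but this is cosmetic.

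One remark: your concern about $S_K/I_K$ failing to be a finitely generated $\Z$-algebra in the projective cases is unnecessary. Corollary~\ref{iff_faithful_rep} (via Proposition~\ref{prop:reducing_the_ring}) applies to a faithful representation into $K(R)$ for an \emph{arbitrary} commutative ring $R$; the passage to a finitely generated subring is carried out inside that proof using the finitely many generators of $G$, and does not require $R$ itself to be finitely generated. So once $\varphi_K$ is known to be injective, you may invoke Corollary~\ref{iff_faithful_rep} directly without any caveat about finite presentation of $G$ or finite generation of $S_K/I_K$.
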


\begin{proof}
By Proposition~\ref{iff_faithful_rep}, if $G$ it is residually $K$--finite, there exists a faithful $\rho\colon G \to K(R)$ for some $R$ so that, as $\rho$ factors through $\varphi_K$, $\varphi_K$ too is an injection. Conversely, if $\varphi_K$ is faithful, we apply Proposition~\ref{iff_faithful_rep} again with $\varphi_K$ as the injection to conclude that $G$ is residually $K$--finite.
\end{proof}

\begin{Proposition}\label{PSL_SL}
Let $G$ be a finitely generated group.
We have the following implications for $G$:
\begin{center}
 \begin{tikzpicture}
 \draw[-implies, double equal sign distance] (0.2,2.1) node[anchor=east] {\textit{residually $\SL_2$--finite\,\,}} -- (3,2.1)  node[anchor=west] 
 {\textit{\,\,residually $\GL_2$--finite}}
 node[anchor=south,midway] {\textit{}};
 \draw[-implies, double equal sign distance] (3,1.9) -- (0.2,1.9) node[anchor=north,midway] { $Z(G)=1$};
 \draw[implies-, double equal sign distance] (-1.25,1.7) -- (-1.25,0.75) node[anchor=north] {\hspace{-8mm} \textit{residually $\PSL_2$--finite\,\,}} node[anchor=west,midway] {\textit{}};
 \draw[-implies, double equal sign distance] (-1.5,1.7) -- (-1.5,0.75) node[anchor=east,midway] {\textit{ $Z(G)$ 2-t.f.}};
 \draw[implies-, double equal sign distance] (4.5,1.7) -- (4.5,0.75) node[anchor = north] 
 {\hspace{13mm} \textit{\hspace{-5mm} \,\,residually $\PGL_2$--finite.}} node[anchor=east,midway] {\textit{}};
 \draw[-implies, double equal sign distance] (4.75,1.7) -- (4.75,0.75) node[anchor=west,midway] {\textit{ $Z(G) = 1$}};
 \draw[-implies,double equal sign distance] (0.4,0.6) -- (3,0.6) node[anchor=south,midway] {\textit{}};
 \draw[-implies,double equal sign distance] (3,0.4) -- (0.4,0.4) node[anchor=north,midway] {\textit{}};
 \end{tikzpicture}
\end{center}
Hereby recall that  2-t.f.\ means 2-torsion-free and $Z(G)$ denotes the centre of $G$.
\end{Proposition}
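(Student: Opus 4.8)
The plan is to establish the six labelled arrows one at a time, in each case by manipulating a single witnessing representation and invoking Corollary~\ref{iff_faithful_rep} so that we may work with not-necessarily-finite rings. First I would dispose of the two arrows between $\PSL_2$ and $\PGL_2$ at the bottom, since these are formal: there is a ring homomorphism-free inclusion $\PSL_2(R)\hookrightarrow\PGL_2(R)$ for any commutative $R$ (a matrix of determinant $1$ is in particular a unit modulo scalars), so a faithful $G\to\PSL_2(R)$ gives a faithful $G\to\PGL_2(R)$. For the reverse, given a faithful $\rho\colon G\to\PGL_2(R)$, lift each generator to a matrix, adjoin the inverse of its determinant to $R$ to get $\widetilde R$ as in the proof of Proposition~\ref{prop:reducing_the_ring}, and then rescale: over $\widetilde R[\sqrt{\det}]$ (adjoining square roots of the finitely many relevant determinants) one can replace each representing matrix $A$ by $(\det A)^{-1/2}A$, which has determinant $1$ and represents the same element of $\PGL_2$; checking that the relations still hold up to $\pm 1$ and that one lands in $\PSL_2$ of the enlarged ring gives the faithful $\PSL_2$--representation. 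Injectivity is preserved because the composite $G\to\PSL_2(\widetilde R[\sqrt{\det}])\to\PGL_2(\widetilde R[\sqrt{\det}])\supseteq\mathrm{im}(\rho)$ agrees with $\rho$.

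Next I would treat the top row, $\SL_2\Rightarrow\GL_2$ and its converse under $Z(G)=1$. The forward direction is again the obvious functorial inclusion $\SL_2(R)\hookrightarrow\GL_2(R)$. For the converse, suppose $\rho\colon G\to\GL_2(R)$ is faithful; composing with $\GL_2(R)\to\PGL_2(R)$ need not stay faithful, but the kernel of $G\to\PGL_2(R)$ is central in $G$ (its image consists of scalar matrices, which are central in $\GL_2(R)$, so it centralises $\rho(G)$, and $\rho$ is injective), hence trivial by hypothesis; so $G\to\PGL_2(R)$ is faithful and we are reduced to a case already handled, composing with $\PSL_2\hookrightarrow\PGL_2$... wait, the cleaner route is: $G$ residually $\PGL_2$--finite with $Z(G)=1$ gives residually $\PSL_2$--finite by the bottom arrow, and then I would upgrade $\PSL_2$ to $\SL_2$ and $\GL_2$ via the left/right vertical arrows below, so I would in fact prove the verticals first.

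So the real content is the two vertical arrows: $\PSL_2$--finite with $Z(G)$ 2-torsion-free $\Rightarrow$ $\SL_2$--finite, and $\PGL_2$--finite with $Z(G)=1$ $\Rightarrow$ $\GL_2$--finite; the remaining horizontal converses then follow by chaining. Take a faithful $\bar\rho\colon G\to\PSL_2(R)$. The obstruction to lifting to $\SL_2(R)$ is a class in $H^2(G;\{\pm1\})$, and even after a lift exists the lifted map $\rho$ might fail to be injective: its kernel $N$ maps to $\{\pm I\}$, so $N$ is central of exponent dividing $2$, hence trivial when $Z(G)$ is 2-torsion-free. The genuine difficulty, which I expect to be the main obstacle, is producing the lift at all over \emph{some} ring: one cannot lift $\PSL_2(R)\to\SL_2(R')$ for a fixed $R$ in general, so the trick is to build $R'$ by adjoining the needed square roots of determinants of lifted generators and then killing the $2$-cocycle by further ring extension — concretely, pass to the universal ring $S_{\PSL_2}/I_{\PSL_2}$ of Proposition~\ref{universal_rep}, where the $\lambda_i\in\{\pm1\}$ record exactly the relator discrepancies, adjoin a generator $t_i$ with $t_i^2=\lambda_i$ for each relator, and redefine the images of generators so that relators now hold on the nose in $\SL_2$; one must check this is consistent (the $t_i$ only need to satisfy the relations forced by the original relations among relators, which is where $2$-torsion-freeness of $Z(G)$ or triviality of $Z(G)$ enters to guarantee the resulting map is still injective on $G$). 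For the $\PGL_2\Rightarrow\GL_2$ vertical the argument is parallel but easier: a faithful $G\to\PGL_2(R)$ lifts to $\GL_2$ of a suitable extension after adjoining inverses of determinants, with no square-root cocycle obstruction, and the kernel of the lift is central hence trivial under $Z(G)=1$. I would close by remarking that all implications are applications of Corollary~\ref{iff_faithful_rep} together with these elementary ring-extension and central-kernel observations.
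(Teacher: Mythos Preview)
You have misread which vertical arrows carry the hypotheses. In the diagram the \emph{upward} implications $\PSL_2\Rightarrow\SL_2$ and $\PGL_2\Rightarrow\GL_2$ are asserted \emph{unconditionally}; it is the \emph{downward} implications $\SL_2\Rightarrow\PSL_2$ (under ``$Z(G)$ $2$-torsion-free'') and $\GL_2\Rightarrow\PGL_2$ (under ``$Z(G)=1$'') that need the centre hypotheses. Your plan proves the verticals with the conditions swapped, so as written it does not establish the proposition.

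Your central-kernel observation (``the kernel maps to scalar matrices, hence is central, hence trivial under the hypothesis on $Z(G)$'') is exactly the right idea, but it belongs on the \emph{downward} arrows. Given a faithful $G\hookrightarrow\SL_2(R)$, compose with $\SL_2(R)\to\PSL_2(R)$; anything in the kernel maps to $\pm I$, so is central of order dividing $2$, hence trivial when $Z(G)$ is $2$-torsion-free. The case $\GL_2\Rightarrow\PGL_2$ is identical with ``$Z(G)=1$'' replacing ``$2$-torsion-free''. This is precisely how the paper handles these two arrows, and you never treat them.

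The substantive content you are missing is the \emph{unconditional} implication $\PSL_2\Rightarrow\SL_2$. Your route (lift a given faithful $\bar\rho\colon G\to\PSL_2(R)$ to $\SL_2$, then argue the kernel is $2$-torsion central) inevitably uses the $2$-torsion-free hypothesis and so cannot give the unconditional statement. The paper takes a different tack: rather than lift a specific representation, it builds once and for all a ring $R'$ (namely $S_{\PSL_2}/I_{\PSL_2}$ from Proposition~\ref{universal_rep}, with formal variables $\lambda_i$ satisfying $\lambda_i^2=1$ recording the relator signs) together with a map $\varphi\colon G\to\SL_2(R')$ through which \emph{every} representation $\rho\colon G\to\PSL_2(R)$ factors via some $\psi\colon\SL_2(R')\to\PSL_2(R)$. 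Then if any $\rho$ is faithful, $\varphi$ is automatically faithful, with no hypothesis on $Z(G)$. You came close to this when you invoked the universal ring, but your suggestion to adjoin further elements $t_i$ with $t_i^2=\lambda_i$ and ``redefine the images of generators'' is not what is done, and your proposed consistency check (``this is where $2$-torsion-freeness of $Z(G)$\dots enters'') is exactly the step the factoring argument avoids. Once $\PSL_2\Rightarrow\SL_2$ is unconditional, the paper obtains $\PGL_2\Rightarrow\GL_2$ and $\GL_2\Rightarrow\SL_2$ (under $Z(G)=1$) by chaining through the other arrows, as you also proposed.

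Your treatment of $\PGL_2\Rightarrow\PSL_2$ by adjoining square roots of the finitely many determinants is the same as the paper's; note, though, that one must check the map $R\to R[x_1,\dots,x_k]/(x_i^2-a_i^{-1})$ is injective, which the paper does explicitly via an $R$-module retraction and which you should not leave implicit.
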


Note that in passing across these implications, it may be necessary to alter the ring over which the relevant matrix group is considered when one considers the associated faithful representations. We give a simple example. The group $\SL_2(\C)$ contains a unique element of order two, and hence has no embedding of $\Z_2 \oplus \Z_2.$ Now $\Z_2 \oplus \Z_2$ embeds into $\PSL_2(\C),$ but this embedding cannot be lifted to one into $\SL_2(\C).$ 

\begin{proof}
Throughout the proof let  $G$ be a finitely generated group.
It is clear that residual $\SL_2$--finiteness and residual $\PSL_2$--finiteness imply, respectively, residual $\GL_2$--finiteness and residual $\PGL_2$--finiteness. To see that, if $G$ is 2-torsion-free, residual $\SL_2$--finiteness implies residual $\PSL_2$--finiteness, note that via Corollary~\ref{iff_faithful_rep} the former gives us a faithful representation into $\SL_2(R)$ for some $R$ and 2-torsion-freeness of $Z(G)$ implies that the image of this representation cannot contain non-identity scalar matrices. A similar proof shows that if $G$ is centreless, residual $\GL_2$--finiteness implies residual $\PGL_2$--finiteness. \\

Next, we show that  residual $\PGL_2$--finiteness implies residual $\PSL_2$--finiteness. Let $\{ g_1,\dots,g_k \}$ be a generating set for $G$; via Proposition~\ref{iff_faithful_rep}, we have a faithful $\rho\colon G \to \PGL_2(R)$ for some $R$. Choose representatives of the generators $\rho(g_1),\dots,\rho(g_k)$ of $\rho(G)$, let $a_i = \det(\rho(g_i))$ and let $R^\prime = R[x_1,\dots,x_k]/I$ where $I = (x_1^2 - a_1^{-1},\dots,x_k^2 - a_k^{-1})$. 

\begin{claim}
The obvious map $\epsilon \colon R \to R^\prime$  is injective.
\end{claim}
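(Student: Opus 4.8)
The plan is to prove injectivity of $\epsilon\colon R \to R'$ by exhibiting $R'$ concretely as a free module over $R$ and checking that the copy of $R$ sitting inside it as "constants" is a genuine direct summand. Concretely, $R' = R[x_1,\dots,x_k]/(x_1^2 - a_1^{-1},\dots,x_k^2 - a_k^{-1})$, and since we are adjoining square roots of units $a_i^{-1}$ (which are units in $R$, as $a_i = \det\rho(g_i)$ is invertible), each relation $x_i^2 = a_i^{-1}$ is monic of degree $2$ in $x_i$. Therefore $R'$ is a free $R$-module with basis the square-free monomials $\prod_{i \in S} x_i$ for $S \subseteq \{1,\dots,k\}$; this is the standard fact that adjoining a root of a monic polynomial produces a free module of rank equal to the degree, applied $k$ times. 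The element $1$ (corresponding to $S = \emptyset$) is one of the basis elements, so the coefficient-of-$1$ map $R' \to R$ is a well-defined $R$-linear retraction of $\epsilon$; in particular $\epsilon$ is injective (indeed split injective).

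The key steps, in order: (1) observe that each $a_i$ is a unit in $R$, so $a_i^{-1} \in R$ and the polynomial $x_i^2 - a_i^{-1} \in R[x_i]$ is monic; (2) recall/cite that for a monic polynomial $f$ of degree $d$ over a commutative ring $A$, the quotient $A[x]/(f)$ is free as an $A$-module with basis $1, x, \dots, x^{d-1}$, and iterate this over the $k$ variables to get that $R'$ is $R$-free with basis $\{\prod_{i\in S} x_i : S \subseteq \{1,\dots,k\}\}$; (3) since $1$ is an element of this basis, projection onto the $1$-coordinate gives an $R$-module map $\pi\colon R' \to R$ with $\pi \circ \epsilon = \mathrm{id}_R$; (4) conclude $\epsilon$ is injective. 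An alternative, essentially equivalent phrasing avoids the module language: one can adjoin the square roots one at a time and at each stage observe that $A \hookrightarrow A[x]/(x^2 - u)$ is injective for any unit $u$, because a relation $c + dx \in (x^2-u)$ in $A[x]$ with $c,d \in A$ forces $c = d = 0$ by comparing degrees.

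I do not expect a serious obstacle here; this is a routine base-change / free-module argument, and the only thing to be careful about is that the $a_i$ are genuinely units (so that the relations are monic and the quotient behaves well) — which holds precisely because they are determinants of images of group elements under $\rho$. The point of isolating it as a claim is presumably that the subsequent construction needs to view $\rho$ as landing in $\PGL_2$ of the \emph{larger} ring $R'$, where each generator now admits a representative of determinant $1$ (rescale $\rho(g_i)$ by $x_i$), and injectivity of $\epsilon$ is what guarantees that passing to $R'$ does not collapse the faithful representation. So after establishing the claim I would note that $\rho$ composed with $\PGL_2(\epsilon)$ remains faithful, and that over $R'$ the resulting representation lifts to $\SL_2(R')$, hence — modulo the centre/$2$-torsion bookkeeping already handled in the other parts of the proof — descends to a faithful $\PSL_2(R')$ representation, giving residual $\PSL_2$-finiteness via Corollary~\ref{iff_faithful_rep}.
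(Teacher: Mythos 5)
Your proof is correct and rests on the same underlying mechanism as the paper's: exhibit an $R$-linear retraction of $\epsilon$ and conclude injectivity. The only difference is packaging. The paper defines an explicit $R$-module map $\varphi\colon R[x_1,\dots,x_k]\to R$ on monomials (sending $\prod x_i^{n_i}$ to $0$ if some $n_i$ is odd and to $\prod a_i^{-n_i/2}$ otherwise), verifies by hand that $\varphi$ annihilates the ideal $I$, and notes $\varphi\circ\epsilon=\mathrm{id}_R$; you instead appeal to the standard fact that quotienting a polynomial ring by a monic polynomial yields a free module, iterate it $k$ times to get that $R'$ is $R$-free on the squarefree monomials, and take the coefficient-of-$1$ projection. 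Unwinding your projection shows it is literally the same retraction as the paper's $\varphi$, so the two arguments differ only in whether the freeness is re-derived in place or cited; yours is slicker at the cost of invoking a lemma, the paper's is self-contained. One tiny remark on your alternative one-variable phrasing: as your degree comparison shows, $A\hookrightarrow A[x]/(x^2-u)$ is split injective for \emph{any} $u\in A$, not just units --- monicity of $x^2-u$ is all that is used --- though of course the $a_i^{-1}$ here do happen to be units, which is what you later need to rescale representatives into $\SL_2$.
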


Recall that by definition $R[x_1,\dots,x_k]$ is the free $R$-module on the monomials $\prod x_i^{n_i}$. 
We denote by $\varphi$ the $R$-module homomorphism
\[ R[x_1,\dots,x_k] \,\,\to\,\, R\]
that is uniquely determined  by
\[ \tmprod{i=1}{m} x_i^{n_i}\,\,\mapsto \,\,\left\{\begin{array}{ll} 0,&\mbox{ if one of the $n_i$ is not  even,}\\
 \tmprod{i=1}{m} a_i^{-n_i/2},&\mbox{ if all of the $n_i$ are even.}\end{array}\right.\]
 We claim that $\varphi$ vanishes on $I$. Since $\varphi$ is $R$-linear it suffices to show that for any $j$ and any monomial $\prod x_i^{n_i}$ we have $\varphi\big((x_j^2-a_j^{-1})\prod x_i^{n_i}\big)=0$. But this follows easily from considering the two cases that the $n_i$ are all even and that one is not even separately.
It is clear that for any $r\in R$ we have $\varphi(\epsilon(r))=r$. This shows that $\epsilon$ is injective. This concludes the proof of the claim.

It follows from the claim that the $\iota\colon \PGL_2(R) \to \PGL_2(R^\prime)$ which applies the previous map $\epsilon R \to R^\prime$ to each entry is injective. This gives us a faithful representation $\iota \circ \rho\colon G \rightarrow \PGL_2(R^\prime)$. For each $i$, choosing the same representatives of the $\rho(g_i)$ as earlier we note that the representative $x_i(\iota \circ \rho)(g_i)$ has unit determinant. Thus the image of $\iota \circ \rho$ lies in the copy of $\PSL_2(R^\prime)$ inside $\PGL_2(R^\prime)$. \\

Finally we will show that  residual $\PSL_2$--finiteness implies residual $\SL_2$--finiteness; this will, using the other implications proven so far, show also that, under the same conditions, residual $\PGL_2$--finiteness implies residual $\GL_2$--finiteness. To show this, we show that, given  a representation $\rho\colon G \to \PSL_2(R)$, there exists an $R^\prime$ and a map $\varphi\colon G \rightarrow \SL_2(R^\prime)$ through which $\rho$ factors. This will complete the proof because if $G$ is residually 
$\PSL_2$--finite, it admits a faithful $\rho\colon G \to \PSL_2(R)$; this $\rho$ factors through a representation $\rho^\prime\colon G \to \SL_2(R^\prime)$ which is then also faithful and so $G$ is residually $\SL_2$--finite. The construction involved is the same as that for the $K = \PSL_2$ case in the proof of Proposition~\ref{universal_rep}. Let $G = \langle g_1,\dots,g_n \: | \: \{r_i\}_{i\in I} \rangle$, let \[S = \mathbb{Z}[x_{1a},x_{1b},x_{1c},x_{1d},\dots,x_{na},x_{nb},x_{nc},x_{nd},\{\lambda_i\}_{i\in I}]\] and then define \[p(g_i) = \left(\begin{array}{cc} x_{ia} & x_{ib} \\ x_{ic} & x_{id} \end{array}\right),\quad  p(g_i^{-1}) = \left(\begin{array}{rr} x_{id} & -x_{ib} \\ -x_{ic} & x_{ia} \end{array}\right).\] Define also $p(r_i)$  by setting that $p$ be multiplicative and then define 
\[I = \left\langle \{\det\,p(g_i)-1\}_i \cup \{\lambda_i^2-1\}_{i} \cup \{(p(r_i)-\lambda_i )_{k,l}\}_{i,k,l}\right\rangle.\]
Now set $R^\prime = S/I$ and \[\varphi\colon G \to \SL_2(R^\prime)\colon g_i \mapsto \left(\begin{array}{cc} \overline{x_{ia}} & \overline{x_{ib}} \\ \overline{x_{ic}} & \overline{x_{id}} \end{array}\right)\]
which, as it can be checked, gives a homomorphism. Now, given $\rho\colon G \to \PSL_2(R)$, let $(a^i_{kl})_{kl}$ be representatives for $\rho(g_i)$ and let $\mu_i \in R^\times$ 
be the element such 
the corresponding representative for $\rho(r_i)$ is equal to $\mu_i$ times the identity matrix. Note that  $\mu_i^2 = 1$. Define $q\colon R^\prime \to R\colon 1,x_{ia},x_{ib},x_{ic},x_{id},\lambda_i \mapsto 1,a^i_{11},a^i_{12},a^i_{21},a^i_{22},\mu_i$ and set $\psi = q_*\colon \SL_2(R^\prime) \to \SL_2(R) \to \PSL_2(R)$. Then $\rho = \psi \circ \varphi$.
\end{proof}

\begin{Corollary}\label{cor:res_PSL_all_you_need}
\begin{enumerate}
\item 
If the fundamental group of a compact 3--manifold is residually $\PSL_2$--finite or residually $\PGL_2$--finite, it is also residually $K$--finite for the other $K$.
\item If $M$ is an aspherical 3--manifold that is not a Seifert fibered manifold, then all of the above four notions of residually finiteness agree.
\end{enumerate}
\end{Corollary}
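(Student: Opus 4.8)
The plan is to derive both statements from Proposition~\ref{PSL_SL}, whose implications become two-way once we know enough about the centre of the group in question. For part (1), let $M$ be a compact $3$--manifold and $G=\pi_1(M)$. I would first recall the standard structure theory of the centre of a $3$--manifold group: after passing to the prime decomposition and noting that $\pi_1$ of a connected sum is a free product, $Z(G)$ can only be nontrivial when there is essentially one prime summand, and in that case (by work going back to Waldhausen, Jaco--Shalen, and the geometrisation theorem) a nontrivial centre forces $M$ to be Seifert fibered, with $Z(G)$ infinite cyclic (the fibre class) in the aspherical case, or $G$ finite cyclic / a small list of spherical-space-form groups otherwise. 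In every one of these cases $Z(G)$ is either trivial or torsion-free, so in particular $2$--torsion-free. Feeding this dichotomy into Proposition~\ref{PSL_SL}: if $Z(G)=1$ then all four arrows are biconditional and we are done; if $Z(G)\neq 1$ it is still $2$--torsion-free so the left two vertical arrows are biconditional, and combined with the always-valid horizontal and diagonal implications of Proposition~\ref{PSL_SL} one checks that residual $\PSL_2$--finiteness and residual $\PGL_2$--finiteness are each equivalent to all four notions. I would finish part (1) by a short case check chasing the diagram.

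For part (2), $M$ is aspherical and not Seifert fibered. By the discussion above (or directly: an aspherical $3$--manifold group with nontrivial centre is Seifert fibered — this is the classical ``centre theorem''), we get $Z(G)=1$. Then all four hypotheses ``$Z(G)=1$'' and ``$Z(G)$ $2$--t.f.'' in Proposition~\ref{PSL_SL} are satisfied simultaneously, so every implication in that diagram is a biconditional and the four notions of residual $K$--finiteness coincide.

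The main obstacle — really the only non-formal point — is justifying the centre dichotomy for compact $3$--manifold groups cleanly enough to cite. I would handle the reducible case by the Kurosh-type observation that a free product of nontrivial groups is centreless unless it is $\Z$ or $\Z_2\ast\Z_2$ (and the latter is $\pi_1(\R P^3\#\R P^3)$, a graph-manifold-like case one disposes of by hand), and the irreducible case by quoting the centre theorem for Haken manifolds together with geometrisation for the non-Haken and spherical cases, concluding that a nontrivial centre implies Seifert fibered with $2$--torsion-free centre. One caveat to flag: the non-orientable and boundary cases need the same references, and for finite $\pi_1$ one simply lists the spherical space form groups and checks their centres are cyclic hence $2$--torsion-free. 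With that in hand, both parts are immediate from Proposition~\ref{PSL_SL}, so I would keep the write-up of the diagram chase terse.
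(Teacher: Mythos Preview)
For Part~(2) your approach matches the paper's: an aspherical non--Seifert-fibered $3$--manifold has torsion-free fundamental group with trivial centre (the paper cites \cite[Theorem~2.5.5, (C.3)]{AFW} for both facts), and then every arrow in Proposition~\ref{PSL_SL} becomes biconditional.

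For Part~(1), however, you are working much harder than necessary, and one of your intermediate claims is false. The assertion that $Z(G)$ is $2$--torsion-free for every compact $3$--manifold group fails: take $M=S^3/Q_8$, whose fundamental group has centre $\Z/2$ (or even $M=\R P^3$, with $\pi_1=\Z/2$). Your sentence ``their centres are cyclic hence $2$--torsion-free'' is the slip---$\Z/2$ is cyclic and has $2$-torsion. So the diagram chase you propose, which relies on the $2$--torsion-free hypothesis to reverse the left vertical arrow, does not go through in general.

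The paper's proof of Part~(1) sidesteps the centre entirely. In the diagram of Proposition~\ref{PSL_SL}, the \emph{unconditional} arrows already give
\[
\text{res.\ }\PSL_2 \;\Longleftrightarrow\; \text{res.\ }\PGL_2,
\qquad
\text{res.\ }\PSL_2 \;\Longrightarrow\; \text{res.\ }\SL_2 \;\Longrightarrow\; \text{res.\ }\GL_2,
\]
so starting from either residual $\PSL_2$-- or residual $\PGL_2$--finiteness one reaches all four $K$ with no hypothesis on $Z(G)$. The only thing to check is that $G$ is finitely generated so that Proposition~\ref{PSL_SL} applies at all; the paper records this by noting that compact $3$--manifold groups are finitely presented (citing \cite{Kirby}). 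Thus Part~(1) needs no structure theory of centres whatsoever---you should simply drop that analysis and read off the unconditional implications.
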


\begin{proof}
The first part follows from Proposition~\ref{PSL_SL} and the observation that all compact 3--manifold groups are finitely presented; for a proof of this latter fact, see \cite{Kirby}, where it is shown that compact topological manifolds have the homotopy type of a finite CW-complex. 

The second statement follows again from Proposition~\ref{PSL_SL} and the fact that fundamental groups of aspherical 3--manifolds are torsion-free and that the only 3--manifolds with a non-trivial center are Seifert fibered manifolds. We refer to  \cite[Theorem~2.5.5, (C.3)]{AFW} for proofs of these two statements.
\end{proof}


\section{A trip to the zoo}
\label{sec:definitions}


\subsection{Symmetric groups}

Luo conjectured that every compact 3--manifold group is residually $\PGL_2$--finite. As a first observation, recall that every compact $3$--manifold group is residually finite. See \cite{Hempel} for the case of Haken manifolds, which can be  extended to the general case via geometrisation as discussed in  \cite{Hempel} and \cite[Theorem 3.3]{Thu1982}. Correctness of Luo's conjecture would provide a list of specific finite groups which detect non-triviality. Now, as finite groups embed into symmetric groups, if we had that $S_n$, the symmetric group on $n$ letters, is residually $\PGL_2$--finite for all $n$, we would have verified Luo's conjecture. However, we have the following result, which was obtained independently in \cite{bachelorarbeit}.

\begin{Theorem}\label{thm:Sn}
\textit{$S_n$ is residually $\PGL_2$--finite if and only if $n < 5$.}
\end{Theorem}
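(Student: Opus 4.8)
The plan is to split the statement into the two directions: the positive direction ($n < 5$ implies residually $\PGL_2$--finite) and the negative direction ($n \geq 5$ implies not residually $\PGL_2$--finite).

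For the positive direction, one only needs to treat $n = 4$, since $S_1, S_2, S_3$ are subgroups of $S_4$ and residual $\PGL_2$--finiteness passes to subgroups. The strategy here is to exhibit an explicit embedding of $S_4$ into $\PGL_2(R)$ for a suitable finite commutative ring $R$. The natural candidate is $R = \mathbb{F}_3$: indeed $\PGL_2(\mathbb{F}_3) \cong S_4$ (it has order $24$ and acts faithfully on the four points of $\mathbb{P}^1(\mathbb{F}_3)$), so the identity isomorphism furnishes the required faithful representation, and then Corollary~\ref{iff_faithful_rep} (or directly the definition) gives residual $\PGL_2$--finiteness. I would simply cite or verify $\PGL_2(\mathbb{F}_3) \cong S_4$ and note that residual $K$--finiteness is inherited by subgroups (if $G \le H$ and $H \to K(R)$ is faithful, restrict it).

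For the negative direction, I would reduce to showing $S_5$ is not residually $\PGL_2$--finite, since $S_5 \le S_n$ for all $n \geq 5$ and a quotient-type argument shows that if a subgroup of $G$ fails to be detected then so does $G$ — more precisely, by Proposition~\ref{prop:iff_injection} it suffices to show the universal map $\varphi_{\PGL_2}\colon S_5 \to \PGL_2(S_{\PGL_2}/I_{\PGL_2})$ is not injective, and since any representation $S_n \to \PGL_2(R)$ restricts to $S_5$, non-injectivity for $S_5$ forces the normal subgroup generated by the undetected element to lie in the kernel; as $A_5$ is the unique minimal normal subgroup contained in $A_n$ for $n\ge5$... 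Actually the cleanest route: every normal subgroup of $S_n$ for $n \geq 5$ contains $A_n \supseteq A_5$, so it is enough to find a single nontrivial element of $S_5$ (say a $3$-cycle, which generates $A_5$ normally in $S_5$) that is killed by every homomorphism $S_5 \to \PGL_2(R)$; equivalently, that every homomorphism $S_5 \to \PGL_2(R)$ with $R$ commutative factors through $S_5 / A_5 \cong \Z_2$. So the core claim is: \emph{for every commutative ring $R$, every homomorphism $A_5 \to \PGL_2(R)$ is trivial}, or at least every homomorphism $S_5 \to \PGL_2(R)$ kills $A_5$.

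The main obstacle is precisely proving that $A_5$ has no nontrivial homomorphism into $\PGL_2(R)$ for any commutative ring $R$. I would approach this by working with the universal ring: by Proposition~\ref{universal_rep} it suffices to analyze $\varphi_{\PGL_2}\colon A_5 \to \PGL_2(S/I)$ where $S/I$ is the universal ring for $A_5$, and show the target is trivial or that $\varphi$ kills everything. Concretely, pick the standard presentation of $A_5$ (generated by two elements $x,y$ with $x^2 = y^3 = (xy)^5 = 1$, the $(2,3,5)$ triangle group relations), lift to matrices $X, Y \in \GL_2(R)$ with $X^2, Y^3, (XY)^5$ each scalar, and derive constraints. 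Over a field this is the classical fact that $A_5 \cong \PSL_2(\mathbb{F}_5) \cong \PSL_2(\mathbb{F}_4)$ does embed — so the claim as I stated it is FALSE over fields, and I must have the reduction wrong. Let me reconsider: the right statement must be subtler — perhaps that $S_5$ (not $A_5$) fails, because $S_5 \not\cong \PGL_2(\text{anything useful})$ and the outer structure obstructs it; one shows any $\rho\colon S_5 \to \PGL_2(R)$ must send $A_5$ into a $\PSL_2$-type subgroup where the element of order $2$ in $S_5 \setminus A_5$ cannot act correctly by conjugation. So the real technical heart is a careful case analysis, using Baumslag's theorem (Theorem~\ref{Baumslag}) to reduce to finite rings and then to local rings and residue fields, of the possible images of a transposition and a $5$-cycle, showing these relations are jointly unsatisfiable in $\PGL_2(R)$; I expect this case analysis over local rings, lifting the known field-level obstruction (that $S_5$ embeds in no $\PGL_2(\mathbb{F}_q)$) and handling nilpotents, to be the delicate part, and I would lean on the universal representation of Proposition~\ref{universal_rep} and a Groebner-basis or direct elimination computation to pin down $S_{\PGL_2}/I_{\PGL_2}$ explicitly for $S_5$.
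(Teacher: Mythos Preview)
Your positive direction is fine and matches the paper (the paper phrases it via $\PSL_2(\mathbb{F}_3)$, you via $\PGL_2(\mathbb{F}_3)$; either way one gets a faithful $2$--dimensional projective representation of $S_4$ over a finite field, and the reduction to $n=4$ by passing to subgroups is exactly right).

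For the negative direction, your proposal meanders but ultimately lands where the paper does. Two remarks. First, your initial attempt---showing every $A_5 \to \PGL_2(R)$ is trivial---is indeed false, as you realised; the paper's Remark after the theorem makes this explicit: $A_5 \cong \PSL_2(\mathbb{F}_4) \cong \PSL_2(\mathbb{F}_5)$ and $A_6 \cong \PSL_2(\mathbb{F}_9)$ are residually $\PSL_2$--finite, so the obstruction genuinely lives in the extension to $S_5$, not in $A_5$ itself. Second, the alternative route you sketch (reduce to finite local rings via Baumslag, pass to residue fields, lift the field-level non-embedding of $S_5$ through the nilradical) is plausible in spirit but is not what the paper does, and the ``handling nilpotents'' step would need real work---there is no automatic reason an obstruction over fields lifts to local Artinian rings.

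What the paper actually does is precisely your final fallback: take Moore's Coxeter presentation $S_5 = \langle x_1,\dots,x_4 \mid x_i^2, (x_ix_{i+1})^3, (x_ix_j)^2 \text{ for } |i-j|\ge 2\rangle$, form the universal $\PSL_2$--representation of Proposition~\ref{universal_rep}, and verify by a Gr\"obner basis computation (reproduced in the appendix) that the image of the $3$-cycle $x_1x_2$ is trivial in $\PSL_2(S_{\PSL_2}/I_{\PSL_2})$. By Proposition~\ref{prop:iff_injection} this shows $S_5$ is not residually $\PSL_2$--finite, and then Proposition~\ref{PSL_SL} (the $\PSL_2 \Leftrightarrow \PGL_2$ equivalence, which needs no hypothesis on the centre) gives the $\PGL_2$ conclusion. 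Working in $\PSL_2$ rather than directly in $\PGL_2$ is a minor but useful simplification, since the universal $\PSL_2$-ring has fewer variables. So your endgame is right; the paper just commits to it from the start and offloads the verification to the machine.
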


\begin{proof}
Given positive integers $n < m$, $S_n$ embeds into $S_m$ (as the stabilizer of the final $m-n$ letters). Thus it suffices to prove that $S_4$ is residually $\PSL_2$--finite and that $S_5$ is not. The former follows from the fact that $S_4$ is isomorphic to $\PSL_2(\mathbb{F}_3)$, where $\mathbb{F}_3$ is the field with 3 elements; see \cite[Chapter 8]{Rotman} (the isomorphism arises from the faithful natural action of the latter on the projective line $\mathbb{P}^1(\mathbb{F}_3)$, which has 4 elements). For the latter, we use the following presentation for $S_5$:
\[ \left\langle x_1,x_2,x_3,x_4 \: \Bigg| \begin{array}{ll} x_i^2 = 1 & 1 \le i \le 4 \\ (x_ix_{i+1})^3 = 1 & 1 \le i < 3 \\ (x_ix_j)^2 = 1 & 1 \le i < j - 1 \le 3 \end{array} \right\rangle
\]
where $x_i$ is the transposition $(i \enspace i{+}1)$; this is a particular case of Moore's presentations for the symmetric groups, see \cite{Moore}. The required result is verified using the characterisation of residual $\PSL_2$--finiteness provided by Proposition~\ref{prop:iff_injection} above. The relevant computation is in the appendix, which shows that $S_5$ fails residual $\PSL_2$--finiteness in particular for the element $x_1x_2$. It follows from Proposition~\ref{PSL_SL} that $S_5$ also fails residual $\PGL_2$--finiteness.
\end{proof}

\begin{Remark}
The alternating groups $A_5$ and $A_6$ are residually $\PSL_2$--finite. This follows from the existence of isomorphisms $A_5 \cong \PSL_2(\mathbb{F}_4) \cong \PSL_2(\mathbb{F}_5)$ and  $A_6 \cong \PSL_2(\mathbb{F}_9)$, where $\mathbb{F}_4$, $\mathbb{F}_5$ and $\mathbb{F}_9$ are the fields with 4, 5 and 9 elements, respectively; see \cite[Chapter 8]{Rotman}. One consequence of this is that the property of being residually $\PSL_2$--finite is not inherited from finite index subgroups, even in the case of index two.
\end{Remark}


\subsection{General linear groups}

Note that $S_n$ embeds into $\GL_n(R)$ for any non-zero commutative ring with identity $R$ by mapping each permutation to the corresponding permutation matrix. Thus if we define \emph{residually $\PGL_n$--finite} in a manner similar to that in Definition~\ref{def:residual_props}, we find that $S_n$ is residually $\PGL_n$--finite. To see this, note that the canonical surjection $\GL_n(R) \to \PGL_n(R)$ is injective on the copy of $S_n$ in $\GL_n(R)$. Thus we see that Luo's conjecture holds if we weaken it to allow arbitrary dimension of matrices. On the other hand, the observation that $S_n$ embeds into $\GL_n(R)$, along with the above theorem, also gives the following:

\begin{Corollary}\label{cor:GL_res}
For any commutative ring with identity $R$, $\GL_n(R)$ is not residually $\PGL_2$--finite for $n \ge 5$.
\end{Corollary}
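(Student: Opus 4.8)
The plan is to deduce this directly from Theorem~\ref{thm:Sn} by combining the permutation-matrix embedding already noted in the text with the elementary fact that residual $\PGL_2$--finiteness passes to subgroups. So the first step I would carry out is to record that subgroup-inheritance fact: if $H$ is a subgroup of a group $G$ and $G$ is residually $\PGL_2$--finite, then so is $H$. Given $h \in H \setminus \{1\}$, regard $h$ as a non-trivial element of $G$; Definition~\ref{def:residual_props} supplies a finite commutative ring $R'$ with identity and a homomorphism $\rho\colon G \to \PGL_2(R')$ with $h \notin \ker \rho$, and then $\rho|_H\colon H \to \PGL_2(R')$ has $h \notin \ker(\rho|_H)$. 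Note this argument uses no finiteness hypothesis on $G$ or $H$, which matters since $\GL_n(R)$ need not be finitely generated.

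The second step is to locate a copy of $S_5$ inside $\GL_n(R)$ when $n \ge 5$ and $R$ is a non-zero commutative ring with identity. Embed $S_5 \hookrightarrow S_n$ as the subgroup fixing the last $n-5$ letters, and embed $S_n \hookrightarrow \GL_n(R)$ by sending a permutation to its permutation matrix; the latter is injective because $R \ne 0$ forces the standard basis vectors $e_1, \dots, e_n$ to be pairwise distinct. Putting the two steps together: were $\GL_n(R)$ residually $\PGL_2$--finite, its subgroup $S_5$ would be too, contradicting the ``only if'' direction of Theorem~\ref{thm:Sn}.

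There is essentially no obstacle here --- the corollary is a formal consequence of Theorem~\ref{thm:Sn} --- and the only point deserving a word of caution is the degenerate case $R = 0$, for which $\GL_n(R)$ is the trivial group and the statement should be read (consistently with the sentence preceding it in the text) as referring to non-zero $R$.
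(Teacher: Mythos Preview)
Your proof is correct and follows essentially the same approach as the paper: the corollary is stated immediately after the observation that $S_n$ embeds into $\GL_n(R)$ via permutation matrices, and is deduced from this together with Theorem~\ref{thm:Sn}. You are simply more explicit than the paper about the (easy) fact that residual $\PGL_2$--finiteness passes to subgroups, and about the degenerate case $R=0$.
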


This raises the following question.

\begin{Question}
Let $R$ be a ring, $n\in \N$ and $k<n$. Is it possible that $\GL_n(R)$ is residually $\GL_k$--finite?
\end{Question}


\subsection{Abelian groups}

\begin{Proposition}\label{prop:fg_abelian_groups}
Every finitely generated abelian group $G$ is residually $K$--finite for all $K = \SL_2$, $\GL_2$, $\PSL_2$, $\PGL_2$.
\end{Proposition}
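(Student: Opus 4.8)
The plan is to use the structure theorem for finitely generated abelian groups together with Corollary~\ref{iff_faithful_rep}: it suffices to exhibit a faithful representation $G \to K(R)$ into $K$ of some (not necessarily finite) commutative ring $R$, and then residual $K$--finiteness follows for free. By the structure theorem, $G \cong \Z^r \oplus \Z/n_1 \oplus \cdots \oplus \Z/n_s$. Since a direct sum of cyclic groups is generated by finitely many commuting elements, and since commuting diagonalisable matrices can be handled simultaneously, the natural strategy is to embed $G$ as a group of diagonal matrices over a suitably chosen ring.

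First I would treat the torsion-free part and the torsion part. For $\Z^r$, one can map generators to diagonal matrices $\mathrm{diag}(t_i, t_i^{-1})$ over the Laurent polynomial ring $R_0 = \C[t_1^{\pm 1},\dots,t_r^{\pm 1}]$ (or $\Z[t_1^{\pm 1},\dots,t_r^{\pm 1}]$); these lie in $\SL_2(R_0)$, they commute, and the map $\Z^r \to \SL_2(R_0)$ is faithful because the monomials $t_1^{a_1}\cdots t_r^{a_r}$ are $\R$-linearly independent in $R_0$. For a single cyclic factor $\Z/n$, choose a ring $R_1$ containing a primitive $n$-th root of unity $\zeta_n$ (e.g. $R_1 = \C$, or $\Z[\zeta_n]$) and send a generator to $\mathrm{diag}(\zeta_n, \zeta_n^{-1})$, which has order exactly $n$ in $\SL_2(R_1)$ and hence gives a faithful representation of $\Z/n$. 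To combine these faithful representations of the cyclic factors into a faithful representation of the direct sum, I would take the tensor/block approach: over the product ring $R = R_0 \times R_1 \times \cdots \times R_s$ (equivalently, using the direct-sum-of-rings trick recorded in observation (2) in the proof of Proposition~\ref{prop:res_finite_gives_faithful}), map $G$ diagonally so that the $i$-th coordinate is the faithful representation of the $i$-th factor composed with the projection $G \to (\text{$i$-th factor})$. Each nontrivial $g \in G$ has some nontrivial component in some factor, so its image is nontrivial in the corresponding coordinate of $K(R)$; hence the combined representation is faithful. This handles $K = \SL_2$, and then $K = \GL_2$ follows a fortiori (the image sits inside $\SL_2(R) \subseteq \GL_2(R)$), while the $\PSL_2$ and $\PGL_2$ cases follow from Proposition~\ref{PSL_SL} since finitely generated abelian groups are finitely presented; alternatively one can simply compose with $\SL_2(R) \to \PSL_2(R)$ — here one must take a tiny bit of care that passing to the projective quotient does not kill anything, which is where a diagonal entry like $(\zeta_n, \zeta_n^{-1})$ versus a scalar matters.

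The only genuine subtlety — and the place I would slow down — is ensuring faithfulness is preserved under the projectivisation $\SL_2 \to \PSL_2$ (and $\GL_2 \to \PGL_2$): a faithful $\SL_2$--representation can become unfaithful after quotienting by scalars if, say, a $\Z/2$ factor maps to $\pm I$. The cleanest fix is to invoke the implication chart of Proposition~\ref{PSL_SL} directly (a finitely generated abelian group is finitely presented, so residual $\SL_2$--finiteness $\Rightarrow$ residual $\GL_2$--finiteness $\Rightarrow$ residual $\PGL_2$--finiteness $\Rightarrow$ residual $\PSL_2$--finiteness, traversing the diagram), so that I only need to establish the $\SL_2$ case honestly. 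Thus the real content is the explicit diagonal embedding of $G$ into $\SL_2(R)$ over a product of a Laurent polynomial ring and cyclotomic rings, together with the linear-independence argument showing the torsion-free part injects; everything else is bookkeeping via Corollary~\ref{iff_faithful_rep} and Proposition~\ref{PSL_SL}. I do not expect any serious obstacle here — the statement is essentially an exercise once the right ring is chosen — so the main "obstacle" is merely organising the direct-sum-of-rings argument cleanly rather than any conceptual difficulty.
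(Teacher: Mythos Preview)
Your $\SL_2$ (and hence $\GL_2$) argument via diagonal matrices over a product of a Laurent ring and cyclotomic rings is correct and is a legitimate alternative to the paper's approach. The paper instead represents each cyclic factor $\Z$ or $\Z/n\Z$ by the unipotent map $m \mapsto \begin{pmatrix} 1 & m \\ 0 & 1 \end{pmatrix}$ over $\Z$ or $\Z/n\Z$, and then, rather than assembling a single faithful representation of $G$, simply notes that any nontrivial $g$ survives in some cyclic projection and hence in the corresponding $K(R)$. Both routes handle $\SL_2$ and $\GL_2$ equally well.

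The gap in your proposal is the passage to $\PSL_2$ and $\PGL_2$. You correctly flag that projectivisation can kill $-I$, but your proposed fix---traversing the diagram of Proposition~\ref{PSL_SL} via ``residually $\SL_2$--finite $\Rightarrow$ residually $\GL_2$--finite $\Rightarrow$ residually $\PGL_2$--finite $\Rightarrow$ residually $\PSL_2$--finite''---does not work: the implication $\GL_2 \Rightarrow \PGL_2$ in that diagram carries the hypothesis $Z(G)=1$, and for an abelian group $Z(G)=G$, so this arrow is unavailable unless $G$ is trivial. Likewise the direct arrow $\SL_2 \Rightarrow \PSL_2$ requires $Z(G)$ to be $2$--torsion-free, which fails as soon as $G$ has a $\Z/2$ summand. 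Concretely, your diagonal representation sends the generator of $\Z/2$ to $\mathrm{diag}(-1,-1)=-I$, which dies in $\PSL_2$; nothing in your argument repairs this.

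The paper's unipotent choice is precisely what sidesteps the issue: a matrix $\begin{pmatrix} 1 & m \\ 0 & 1 \end{pmatrix}$ with $m\neq 0$ is never scalar, so its image in $\PSL_2(R)$ or $\PGL_2(R)$ is still nontrivial. Thus the paper verifies all four $K$ simultaneously for cyclic groups, and the projections finish the job. If you want to keep your diagonal/product-ring framework, you must either switch to unipotents or argue the projective cases directly; invoking Proposition~\ref{PSL_SL} is not available here.
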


\begin{proof}
It is easy to see that $\Z$ and $\Z/n\Z$ are residually $K$--finite for each $K$ via matrices of the form \[ \left( \begin{array}{cc} 1 & n \\ 0 & 1 \end{array} \right) \]
where the ring of entries is $\Z$ in the case of $\Z$ and $\Z/n\Z$ in the case of $\Z/n\Z$. Now, given a finitely generated abelian group $G$, decompose $G$ via the classification theorem for finitely generated abelian groups and then use the projections onto each factor.
\end{proof}

Thus, given a finitely generated group $G$ and an element $g \in G$ that is not contained in the commutator subgroup $[G,G]$, by passing to the abelianisation, we can construct a finite commutative ring $R$ and a homomorphism $G \to K(R)$ that does not kill $g$. As such, it is only elements in the commutator subgroup that we ever need to worry about.


\subsection{Dihedral groups}

Denote $D_{2k} = \langle a, b \mid a^{k} = b^2 = 1, bab = a^{-1}\rangle$ the dihedral group of order $2k.$ 
A faithful representation $D_{2k}\to \PSL_2(\C)$ is defined by 
$$a \mapsto \pm \begin{pmatrix} \xi & 0 \\ 0 & \xi^{-1}\end{pmatrix} \quad\text{and}\quad b \mapsto \pm \begin{pmatrix} \hfill 0 & 1 \\ -1 & 0 \end{pmatrix},$$
where $\xi=\exp(\pi i /k).$ Whence $D_{2k}$ is residually $K$--finite for all $K = \SL_2$, $\GL_2$, $\PSL_2$, $\PGL_2$. This family includes examples with 2-torsion in their centre.


\subsection{Surface groups}

\begin{Lemma}\label{lem:surface groups}
The fundamental group of an orientable, compact, connected surface is residually $\PSL_2$--finite.
\end{Lemma}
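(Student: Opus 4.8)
The plan is to use the characterisation from Corollary~\ref{iff_faithful_rep}: it suffices to exhibit a faithful representation $\pi_1(\Sigma) \to \PSL_2(R)$ for \emph{some} commutative ring $R$, not necessarily finite. The natural candidate is $R = \C$ (or $\R$), using the classical fact that orientable surface groups are either trivial, $\Z$ (for the sphere and torus cases -- actually $\Z^2$ for the torus), or for genus $g \ge 2$ admit a discrete faithful representation into $\PSL_2(\R) = \operatorname{Isom}^+(\Hyp^2)$ coming from a hyperbolic structure. So the proof splits into the low-genus cases, which are handled by Proposition~\ref{prop:fg_abelian_groups} (the sphere gives the trivial group, the torus gives $\Z^2$, both finitely generated abelian), and the genus $g \ge 2$ case.

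For the genus $g \ge 2$ case, the key step is to invoke the uniformisation theorem: a closed orientable surface $\Sigma_g$ of genus $g \ge 2$ carries a complete hyperbolic metric, and the holonomy of this metric gives an injective homomorphism $\operatorname{hol}\co \pi_1(\Sigma_g) \to \operatorname{Isom}^+(\Hyp^2) = \PSL_2(\R)$. (For surfaces with boundary, $\pi_1$ is free and embeds in $\PSL_2(\R)$ as a Schottky-type group, but one could also just cite that free groups are residually $\PSL_2$--finite, or handle it by the hyperbolic structure on the interior.) Then by Corollary~\ref{iff_faithful_rep}, since $\pi_1(\Sigma_g)$ is finitely generated and admits a faithful representation into $\PSL_2(\R)$, it is residually $\PSL_2$--finite.

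I expect there is no serious obstacle here; the statement is essentially a direct consequence of uniformisation plus the machinery already built up in Section~\ref{sec:characterisations}. The one point requiring a little care is making sure all cases of ``orientable, compact, connected surface'' are covered -- closed versus with boundary, and the sporadic low-complexity cases ($S^2$, $D^2$, annulus, torus) where either the group is trivial/abelian or free -- but each of these is dispatched either by Proposition~\ref{prop:fg_abelian_groups} or by noting the group is free and hence embeds in $\PSL_2(\R)$. A cleaner alternative, avoiding case analysis, is: every such surface group embeds in $\PSL_2(\R)$ (for $S^2$ the group is trivial; otherwise the interior admits a complete hyperbolic structure whose holonomy is faithful, or the torus case is $\Z^2$ which embeds via parabolics $\begin{pmatrix} 1 & * \\ 0 & 1\end{pmatrix}$), and then apply Corollary~\ref{iff_faithful_rep}.
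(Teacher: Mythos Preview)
Your proposal is correct, and the route is genuinely different from the paper's. The paper works on the $\SL_2$ side first and then descends: for surfaces with boundary it embeds the free group $\pi_1(S_{g,b})$ into $\SL_2(\Z)$ via the standard generators $\left(\begin{smallmatrix}1&2\\0&1\end{smallmatrix}\right)$, $\left(\begin{smallmatrix}1&0\\2&1\end{smallmatrix}\right)$; for closed $S_g$ with $g\ge 2$ it cites Newman to embed $\pi_1(S_g)$ into $\pi_1(S_2)$ and Magnus to embed $\pi_1(S_2)$ into $\SL_2(\C)$; in each case torsion-freeness together with Proposition~\ref{PSL_SL} then yields the $\PSL_2$ statement. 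Your argument instead goes directly to $\PSL_2(\R)$ via uniformisation and holonomy, bypassing the Newman--Magnus citations and the $\SL_2$-to-$\PSL_2$ step entirely.

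What each approach buys: the paper's argument is self-contained at the level of explicit matrices and two literature references, and it never appeals to the existence of hyperbolic structures. Your approach is shorter and more in the spirit of the paper's introduction (holonomy of geometric structures), and it lands immediately in $\PSL_2$ without needing the torsion-freeness trick. One small point to tighten in your write-up: the claim that $\Z^2$ embeds into $\PSL_2(\R)$ via parabolics is true (take $(m,n)\mapsto \pm\left(\begin{smallmatrix}1&m+n\sqrt{2}\\0&1\end{smallmatrix}\right)$, say), but it is worth saying explicitly, since the torus does not carry a hyperbolic structure and so does not fall under the uniformisation clause; alternatively, just defer the torus to Proposition~\ref{prop:fg_abelian_groups} as you already suggest.
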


\begin{proof}
Let $S_{g,b}$, or $S_g$ if $b=0$, denote our surface, where $g$ denotes the genus and $b$ the number of boundary components. If $b > 0$, $\pi_1(S_{g,b})$ is a free group on $2g+b-1$ generators. It is well-known that a free group of at most countable rank embeds into the free group on two generators and that the free group on two generators embeds into $\SL_2(\Z)$ as the subgroup generated by
\[
 \left(\begin{array}{cc} 1 & 2 \\ 0 & 1 \end{array}\right) \hspace{0.5cm} \text{and} \hspace{0.5cm} \left( \begin{array}{cc} 1 & 0 \\ 2 & 1\end{array}\right).
\]
Because free groups are torsion-free, upon application of Propositions~\ref{iff_faithful_rep} and \ref{PSL_SL}, we have the result. Now suppose that $b=0$; that is, $S_{g,b}$ is closed. If $g = 0$, we have the 2-sphere which has trivial fundamental group and there is nothing to show. If $g = 1$, we have the 2-torus which has an abelian fundamental group and we have already dealt with the case of finitely generated abelian groups in Section 2.  Next, it is shown in \cite{Newman} that, for $g \ge 2$, $\pi_1(S_g)$ embeds into $\pi_1(S_2)$. It is shown in \cite{Magnus} that $\pi_1(S_2)$ embeds into $\SL_2(\C)$ and using torsion-freeness of closed surface groups and Propositions~\ref{iff_faithful_rep} and \ref{PSL_SL}, we have the result.
\end{proof}


\subsection{The integral Heisenberg group}

The \emph{integral Heisenberg group} $H$ has the well-known presentation $\langle a,b,c \: | \: [a,b] = c, c \ \: \text{central} \rangle.$ We now show that it is residually $\PSL_2$--finite.

An element of $H$ is a word $w$ in the letters $a,b,c$. Because $c$ is central, one can write $w = w^\prime c^{q}$ for some $q \in \Z$ and word $w^\prime$ in $a,b$. Because $[a,b] = c$ and so $ab = c(ba)$, one can interchange $a,b$ in $w^\prime$ at the cost of introducing a $c$ which can once again be pushed off to the right so that one can write $w = a^m b^n c^p$ for some $m,n,p \in \Z$. Note that the abelianisation of $H$ is $\Z \times \Z$, generated by $A = \overline{a}, B= \overline{b}$. The element $w$ maps to $mA + nB$ and so, since $\Z \times \Z$ is residually $\PSL_2$--finite, we see that we have reduced to the case that $w$ is power of $c$. Let $R = \C[x,y,z]/(x(1-y^2)^2,yz-1)$ and define $\rho\colon H \to \SL_2(R)$ as follows
\[
a \mapsto \left(\begin{array}{cc} 1 & \overline{x} \\ 0 & 1 \end{array}\right) \:\:\:
b \mapsto \left(\begin{array}{cc} \overline{y} & 0 \\ 0 & \overline{z} \end{array}\right) \:\:\:
c \mapsto \left(\begin{array}{cc} 1 & \overline{x(1-y^2)} \\ 0 & 1 \end{array}\right).
\]
It can be checked that the relations of $H$ are indeed satisfied and also that $x(1-y^2) \notin I$ (to see a verification via SageMath, see the appendix) so that $\rho(c) \neq 1$. Similarly, as 
\[
\left(\begin{array}{cc} 1 & x(1-y^2) \\ 0 & 1 \end{array}\right)^n =
\left(\begin{array}{cc} 1 & nx(1-y^2) \\ 0 & 1 \end{array}\right)
\]
we see that $\rho(c^n) \neq 1$ for all $n \neq 0$. We now projectivize, noting that this does not kill $\rho(c^n)$ because an off-diagonal entry is non-zero, and then apply Proposition~\ref{prop:reducing_the_ring}.


\subsection{Hyperbolic manifolds and trivial product geometries}

\begin{Proposition}\label{prop:geometric_manifolds}
If the compact orientable geometric 3--manifold $M$ is modelled on $\Hyp^3$, $\Hyp^2 \times \E$ or $\Sph^2 \times \E,$ then $\pi_1 M$ is residually $\SL_2,\PSL_2, \GL_2, \PGL_2$--finite.
\end{Proposition}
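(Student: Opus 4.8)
The plan is to treat the three geometries uniformly: in each case I would exhibit a faithful representation of $\pi_1(M)$ into $\PGL_2(R)$ for some commutative ring $R$, and then let the machinery of Section~\ref{sec:characterisations} do the rest. Indeed, since $M$ is compact, $\pi_1(M)$ is finitely generated, so by Corollary~\ref{iff_faithful_rep} such a faithful representation shows that $\pi_1(M)$ is residually $\PGL_2$--finite; and since $\pi_1(M)$ is the fundamental group of a compact $3$--manifold, Corollary~\ref{cor:res_PSL_all_you_need}(1) upgrades this at once to residual $K$--finiteness for all four $K$. So the entire content of the proof is the construction of the faithful representation into $\PGL_2$ of a ring.

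For this I would use that $\pi_1(M)$ acts freely, hence faithfully, by isometries on the universal cover of (the interior of) $M$, which by definition of the geometric structure is a convex subset of the model space $X\in\{\Hyp^3,\ \Hyp^2\times\E,\ \Sph^2\times\E\}$ with deck transformations acting as restrictions of isometries of $X$; thus $\pi_1(M)\hookrightarrow\Isom(X)$. It then remains to embed $\Isom(X)$ into $\PGL_2(R)$ for a commutative ring $R$. For $X=\Hyp^3$ there is nothing new: orientability gives $\pi_1(M)\hookrightarrow\Isom^+(\Hyp^3)=\PSL_2(\C)\subset\PGL_2(\C)$. For the two product geometries, the de Rham decomposition theorem identifies $\Isom(\Hyp^2\times\E)=\Isom(\Hyp^2)\times\Isom(\E)$ and $\Isom(\Sph^2\times\E)=\Isom(\Sph^2)\times\Isom(\E)$, the $\E$--factor being the flat de Rham factor, which no isometry can mix with the irreducible non-flat factor. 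Now $\Isom(\Hyp^2)=\PGL_2(\R)$ (acting on the upper half-plane by $z\mapsto\tfrac{az+b}{cz+d}$ and $z\mapsto\tfrac{a\bar z+b}{c\bar z+d}$); $\Isom(\E)=\R\rtimes\{\pm1\}$ embeds into $\PGL_2(\R)$ by the upper-triangular affine representation $t\mapsto\bigl[\begin{smallmatrix}1&t\\0&1\end{smallmatrix}\bigr]$, $-1\mapsto\bigl[\begin{smallmatrix}-1&0\\0&1\end{smallmatrix}\bigr]$; and $\Isom(\Sph^2)=\mathrm{O}(3)$ decomposes, since $3$ is odd, as $\mathrm{SO}(3)\times\{\pm I\}$, where $\mathrm{SO}(3)\cong \mathrm{SU}(2)/\{\pm I\}$ embeds into $\PSL_2(\C)\subset\PGL_2(\C)$ and $\{\pm I\}\cong\Z/2\hookrightarrow\PGL_2(\Q)$. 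Assembling these factors and using the isomorphism $K(R_1\times R_2)\cong K(R_1)\times K(R_2)$ (observation (2) in the proof of Proposition~\ref{prop:res_finite_gives_faithful}), I obtain $\pi_1(M)\hookrightarrow\PGL_2(\R\times\R)$ in the $\Hyp^2\times\E$ case and $\pi_1(M)\hookrightarrow\PGL_2(\C\times\Q\times\R)$ in the $\Sph^2\times\E$ case, which completes the argument. (In the $\Sph^2\times\E$ case one could alternatively observe that $\pi_1(M)$ is one of the finitely many groups $1$, $\Z$ and $\Z/2\ast\Z/2$, and handle them by hand.)

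The step I expect to be the genuine point — rather than a technical obstacle — is the insistence on a faithful representation of $\pi_1(M)$ \emph{itself}. It is tempting to pass to a finite cover, for instance one of the form $\Sigma\times S^1$ in the $\Hyp^2\times\E$ case, where everything is transparent; but, as the paper emphasises in the remark after Theorem~\ref{thm:Sn}, residual $K$--finiteness is \emph{not} inherited by a group from a finite-index subgroup, even of index two, so this shortcut is unavailable. The isometric action on the model space circumvents this by handling $\pi_1(M)$ on the nose, and the remaining work — the explicit embeddings of $\Isom(\Hyp^2)$, $\Isom(\Sph^2)$ and $\Isom(\E)$ into $\PGL_2$ of fields — is elementary and standard. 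Two minor points worth checking are that orientability of $M$ in the product cases only constrains $\pi_1(M)$ to an index-two subgroup of $\Isom(X)$, which is harmless since one embeds all of $\Isom(X)$; and that the argument is insensitive to whether $M$ is closed or has boundary, since in the latter case the universal cover is merely a proper convex subset of $X$ on which $\pi_1(M)$ still acts by restrictions of isometries of $X$.
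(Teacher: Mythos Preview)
Your proof is correct and follows essentially the same route as the paper: identify $\pi_1(M)$ with a subgroup of $\Isom(X)$ via the geometric structure, realise the factors of $\Isom(X)$ inside $2\times 2$ projective matrix groups, and then invoke the machinery of \S\ref{sec:characterisations}. The only differences are cosmetic: the paper exploits orientability to work with $\Isom^+$ and $\PSL_2$, and handles the product factors by projecting onto each separately and applying Proposition~\ref{prop:reducing_the_ring}, whereas you work with the full $\Isom(X)$ and $\PGL_2$ and assemble the factors into a single faithful representation over a product ring via observation~(2) in the proof of Proposition~\ref{prop:res_finite_gives_faithful}.
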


\begin{proof}
If $M$ is modelled on $\Hyp^3$, the holonomy representation gives an embedding of $\pi_1(M)$ into $\text{Isom}^+(\Hyp^3) \cong \PSL(2,\C)$. Thus $\pi_1(M)$ is residually $\PSL_2$--finite and Proposition~\ref{PSL_SL} completes the result. Note here that another way to see that $\pi_1(M)$ is also residually $\SL_2$--finite is via a well-known result due to Thurston that we can lift the holonomy representation into $\PSL(2,\C)$ to one into $\SL(2,\C)$; see \cite{Shalen}.

If $M$ is modelled on $\Hyp^2 \times \E$, the holonomy representation gives an embedding of $\pi_1(M)$ into $\text{Isom}^{+}(\Hyp^2 \times \E^1) = \text{Isom}^{+}(\Hyp^2) \times \text{Isom}^{+}(\E^1) \cong \PSL(2,\R) \times \R$. Note that $\R$ embeds into $\PSL(2,\R)$, exactly via the usual matrix representation for $v \in \text{Isom}^{+}(\E^1)$ as \[\left(\begin{array}{cc} 1 & v \\ 0 & 1 \end{array} \right).\] Now we post-compose the above embedding with the projections of $\PSL_2(\R) \times \R$ onto either factor and then use Proposition~\ref{prop:reducing_the_ring}. This gives us that $\pi_1(M)$ is residually $\PSL_2$--finite and Corollary~\ref{cor:res_PSL_all_you_need} does the rest of the work. \\

If $M$ is modelled on $\Sph^2 \times \E$ and orientable, the holonomy representation gives an embedding of $\pi_1(M)$ into $\text{Isom}^+(\Sph^2) \times \text{Isom}^+(\E) \cong \text{SO}(3,\R) \times \R$. Again, due to the presence of the projections, we need only worry about the two factors in the product, and we can deal with the $\R$ factor as we did in the previous case. To deal with the $\text{SO}(3,\R)$ factor, we recall the well-known double covering $\text{SU}(2,\C) \to \text{SO}(3,\R)$ which gives an isomorphism $\text{SO}(3,\R) \cong \text{PSU}(2,\C) \le \PSL_2(\C)$ and so using Proposition~\ref{prop:reducing_the_ring} and Corollary~\ref{cor:res_PSL_all_you_need}, we conclude that $\pi_1(M)$ satisfies the conclusion.
\end{proof}


\subsection{A counterexample to Luo's conjecture}
\label{sec:counterexample}

Let $M$ be the $(4,1)$-Dehn filling, using the knot theoretic framing, of the figure-8 knot complement. We will show that $M$ is a  counterexample to Luo's conjecture. In SnapPy~\cite{SnapPy} one can construct a triangulation of $M$ and this triangulation can then be imported into Regina~\cite{Regina}. See the appendix. Regina then gives the following presentation for $\Gamma = \pi_1(M)$:
\[
\Gamma = \langle a,b \: | \: a^{-1}b^2a^{-3}b^2 = 1, ba^{-2}ba^{-2}b^3a^{-2} = 1 \rangle.
\]
We re-write this presentation by making the substitutions $a \leadsto b^{-1}$, $b \leadsto a^{-1}$ and set $c = b^2a^{-2}$; this leads to the following presentation:
\[
\Gamma = \langle a,b,c \: | \: ca^2 = b^2, c^{-1}b=bc, ac^{-1}a^{-1} = cac\rangle.
\]
This can be re-written as
\[
\langle a,b,c \: | \: c = b^2a^{-2}, \underbrace{1 = bcb^{-1}c}_{\text{Klein bottle}},\overbrace{a^2=(ac)^3}^{\text{trefoil complement}} \rangle
\]
which highlights the presence of a trefoil knot complement and a Klein bottle; in fact, $M$ can be constructed as the identification of a trefoil knot complement and a twisted $I$-bundle over a Klein bottle. Letting $\Gamma_1 = \langle u,v  \: |  \: u^3 = v^2 \rangle$ and $\Gamma_2 = \langle j,k  \: |  \: jkj^{-1}k  = 1 \rangle$, the peripheral subgroups $\langle v^{-1}u,u^3\rangle \cong \Z \oplus \Z$ and $\langle k,j^2 \rangle \cong \Z \oplus \Z$ are glued via the identifications $v^{-1}u \leftrightarrow k$, $u^3 \leftrightarrow k^{-1}j^2$.

We return to the second presentation for $\Gamma$ above and construct the universal representation of 
Proposition~\ref{universal_rep}. This gives
$\varphi_{\SL_2}\co \Gamma \to \SL_2(S_{\SL_2}/I_{\SL_2})$, where $S_{\SL_2} = \Z[i,j,k,l,p,q,r,s,w,x,y,z]$,
\[
a \mapsto \left( \begin{array}{cc} i & j \\ k & l \end{array} \right)
\qquad
b \mapsto \left( \begin{array}{cc} p & q \\ r & s \end{array} \right)
\qquad
c \mapsto \left( \begin{array}{cc} w & x \\ y & z \end{array} \right)
\]
and the ideal $I_{\SL_2}$ is generated by $il-kj-1,ps-rq-1,wz-yx-1,$ and 12 equations arising from the relations. We have
\begin{align*}
\left( \begin{array}{cc} p & q \\ r & s \end{array} \right)^4  &= 
\left( \begin{array}{cc} (p^2+qr)^2+qr(p+s)(p+s) & q(p^2+qr)(p+s)+q(p+s)(qr+s^2) \\ r(p+s)(p^2+qr)+r(qr+s^2)(p+s) & qr(p+s)(p+s)+(qr+s^2)^2 \end{array} \right) \\ 
&=: \left( \begin{array}{cc} f_1 & f_2 \\ f_3 & f_4 \end{array} \right).
\end{align*}
It can be verified via SageMath~\cite{Sage}, that $f_1-1,f_2,f_3,f_4-1 \in I_{\SL_2}$ so that $\varphi_{\SL_2}(b^4) = 1$ and hence $\varphi_{\SL_2}$ is not injective. See the appendix. Thus $b^4$ is killed in any representation $\Gamma \to \SL_2(R)$, $\Gamma$ is not residually $\SL_2$--finite and so, using the argument of the proof of Proposition~\ref{PSL_SL}, is also not residually $\PSL_2$--finite.



\subsection{Quaternionic space}
\label{sec:quaternionic}

This example illustrates the use of Luo's construction to obtain positive results using triangulations.
Figure~\ref{fig:quaternionic} below depicts an oriented triangulation of quaternionic space $S^3/Q_8$ from Regina, \cite{Regina}; the orientations on the simplices here are $v_i \to v_{i+1}$ and $v_i^\prime \to v_{i+1}^\prime$. The action of $Q_8$ on $S^3$ is the natural one after identifying $S^3$ with $\{(z,w) \in \C^2 \: | \: |z|^2 + |w|^2 = 1\}$ and $Q_8$ with a subgroup of $\SL_2(\C)$ via
\[
 1 \mapsto \left(\begin{array}{cc}
  1 & 0 \\
  0 & 1
 \end{array}\right) \hspace{0.5cm}
 i \mapsto \left(\begin{array}{rr}
  i & 0 \\
  0 & -i
 \end{array}\right) \hspace{0.5cm}
 j \mapsto \left(\begin{array}{rr}
  0 & 1 \\
  -1 & 0
 \end{array}\right) \hspace{0.5cm}
 k \mapsto \left(\begin{array}{cc}
  0 & i \\
  i & 0
 \end{array}\right).
\]

%

The face-pairings, specified via the vertices $v_i, v_i^\prime$, are as follows:
\[
\varphi_1\colon v_0,v_1,v_2 \mapsto v_3^\prime,v_0^\prime,v_1^\prime
\hspace{1cm}
\varphi_2\colon v_0,v_1,v_3 \mapsto v_1^\prime,v_2^\prime,v_0^\prime
\]
\[
\varphi_3\colon v_0,v_2,v_3 \mapsto v_2^\prime,v_0^\prime,v_3^\prime
\hspace{1cm}
\varphi_4\colon v_1,v_2,v_3 \mapsto v_3^\prime,v_2^\prime,v_1^\prime.
\]
From these one can compute the edge cycles, i.e.\ the cyclic sequences of edges identified to one another; these are indicated by colour in Figure~\ref{fig:quaternionic}. We search for solutions to Thurston's equations by labelling the edges of our triangulation by arbitrary elements of some ring $R$ as in Figure~\ref{fig:quaternionicThurston}.

\begin{figure}[h]
\centering
\tikzset{middlearrow/.style={
        decoration={markings,
            mark= at position 0.5 with {\arrow{#1}} ,
        },
        postaction={decorate}
    }
}

\tikzset{backarrow/.style={
        decoration={markings,
            mark= at position 0.8 with {\arrow{#1}} ,
        },
        postaction={decorate}
    }
}

\begin{tikzpicture}[scale=1]
 \draw[middlearrow={latex}] (0,0) to (1,2) node[black,anchor=south]{$v_3$};
 \draw[middlearrow={latex reversed},red] (1,2) -- (1,-1) node[midway,anchor=east,xshift=1mm]{$r^{\prime\prime}$} node[black,anchor=north]{$v_1$};
 \draw[middlearrow={latex reversed},blue] (1,-1) -- (0,0) node[midway,anchor=east]{$r$} node[black,anchor=east]{$v_0$};
 \draw[middlearrow={latex},blue] (1,2) -- (2,0) node[midway,anchor=west]{$r$} node[black,anchor=west]{$v_2$};
 \draw[middlearrow={latex}] (2,0) -- (1,-1) node[midway,anchor=north]{$r^\prime$};
 \draw[backarrow={latex},red] (0,0) -- (0.9,0);
 \draw[red] (1.1,0) -- (2,0) node[near start,anchor=south]{$r^{\prime\prime}$};
 \node at (0.25,1.05) {$r^\prime$};
 
 \draw[middlearrow={latex reversed},blue] (4,0) to (5,2) node[black,anchor=south]{$v^\prime_3$};
 \draw[middlearrow={latex},red] (5,2) -- (5,-1) node[midway,anchor=west]{$s^{\prime\prime}$} node[black,anchor=north]{$v^\prime_1$};
 \draw[middlearrow={latex}] (5,-1) -- (4,0) node[midway,anchor=east]{$s$} node[black,anchor=east]{$v^\prime_0$};
 \draw[middlearrow={latex reversed}] (5,2) -- (6,0) node[midway,anchor=west]{$s$} node[black,anchor=west]{$v^\prime_2$};
 \draw[middlearrow={latex reversed},blue] (6,0) -- (5,-1) node[midway,anchor=west]{$s^\prime$};
 \draw[red] (4,0) -- (4.9,0) node[near end,anchor=south]{$s^{\prime\prime}$};
 \draw[backarrow={latex},red] (6,0) -- (5.1,0);
 \node[blue] at (4.25,1.05) {$s^\prime$};
\end{tikzpicture}
\caption{Triangulation of quaternionic space $S^3/Q_8$ from Regina -- ``SFS [S2: (2,1) (2,1) (2,-1)]: \#1'' in ``Closed Census (Orientable)''}
\label{fig:quaternionic}
\label{fig:quaternionicThurston}
\end{figure}
Following the edge cycles, the gluing equations are then
\[
r^2(s^\prime)^2 = 1 \hspace{1cm} (r^\prime)^2s^2 = 1 \hspace{1cm} (r^{\prime\prime})^2(s^{\prime\prime})^2 = 1.
\]
Combining these with the parameter relations, one finds that the parameter relations together with
\[
r^2(s^\prime)^2 = 1 \hspace{1cm} 2(rs^\prime-1) = 0
\]
are necessary and sufficient conditions on the labels (to see this, re-write the gluing equations in terms of $r$ and $s^\prime$ alone). Thus in the case of an $R$ in which $2$ is not a zero divisor, in particular $\C$, we have that $s^\prime = r^{-1}$ and that the following, for $r \neq 0,1$, are all the solutions to Thurston's equations:
\begin{equation}
(r,r^\prime,r^{\prime\prime},s,s^\prime,s^{\prime\prime}) = \left(r,\smfrac{1}{1-r},\smfrac{r-1}{r},1-r,\smfrac{1}{r},\smfrac{r}{r-1}\right).
\end{equation}
Allowing ourselves extra flexibility, there are other possible solutions. For example, it can be checked that setting $R = \mathbb{F}_4[x]/(x^2)$, where $\mathbb{F}_4 = \{0,1,a,b\}$ is the field with four elements, and $r = a, s^\prime = b + x$, gives a solution to Thurston's equations, namely
\begin{equation}
(r,r^\prime,r^{\prime\prime},s,s^\prime,s^{\prime\prime}) = (a,a,a,b+bx,b+x,b+ax).
\end{equation}
Now we compute the associated holonomy representations. First, consider the generic solution (1) over $\C$, for $r = z$. As in \cite{Feng}, we build a corresponding solution to the homogeneous Thurston equations and then build the holonomy representation $\rho$ as follows:
\begin{itemize}
	\item For the solution to the homogeneous Thurston equations, we choose $q_0 = \{\{v_0,v_1\},\{v_2,v_3\}\}$ and $q_0^\prime = \{\{v^\prime_0,v^\prime_1\},\{v^\prime_2,v^\prime_3\}\}$ as the initial normal quadrilaterals.
	\item We label $v_0,v_1,v_2$ by $[1,0]^t$, $[0,1]^t$, $[1,1]^t$.
	\item We extend this labeling across $\varphi_1$ and via the solution to the homogeneous Thurston equations.
\end{itemize}
Doing so, we find that the resulting labels for $v_3$, $v_0^\prime$, $v_1^\prime$, $v_2^\prime$ and $v_3^\prime$ are, respectively, $[1,z]^t$, $[0,1]^t$, $[1,1]^t$, $[1,z]^t$, $[1,0]^t$. The images of the generators $\varphi_2$, $\varphi_3$, $\varphi_4$, or more precisely the element of $\pi_1(S^3/Q_8) \cong Q_8$ which they represent, are then as follows:
\begin{align*}
&\rho(\varphi_2)\colon \left[\begin{array}{c} 1 \\ 0 \end{array}\right],\left[\begin{array}{c} 0 \\ 1 \end{array}\right],\left[\begin{array}{c} 1 \\ z \end{array}\right] \mapsto \left[\begin{array}{c} 1 \\ 1 \end{array}\right],\left[\begin{array}{c} 1 \\ z \end{array}\right],\left[\begin{array}{c} 0 \\ 1 \end{array}\right] \leadsto \rho(\varphi_2) = \left[\begin{array}{cc} z & -1 \\ z & -z \end{array}\right] \\
&\rho(\varphi_3)\colon \left[\begin{array}{c} 1 \\ 0 \end{array}\right],\left[\begin{array}{c} 1 \\ 1 \end{array}\right],\left[\begin{array}{c} 1 \\ z \end{array}\right] \mapsto \left[\begin{array}{c} 1 \\ z \end{array}\right],\left[\begin{array}{c} 0 \\ 1 \end{array}\right],\left[\begin{array}{c} 1 \\ 0 \end{array}\right] \leadsto \rho(\varphi_4) = \left[\begin{array}{cc} 1 & -1 \\ z & -1 \end{array}\right] \\
&\rho(\varphi_4)\colon \left[\begin{array}{c} 0 \\ 1 \end{array}\right],\left[\begin{array}{c} 1 \\ 1 \end{array}\right],\left[\begin{array}{c} 1 \\ z \end{array}\right] \mapsto \left[\begin{array}{c} 1 \\ 0 \end{array}\right],\left[\begin{array}{c} 1 \\ z \end{array}\right],\left[\begin{array}{c} 1 \\ 1 \end{array}\right] \leadsto \rho(\varphi_4) = \left[\begin{array}{cc} 0 & 1 \\ z & 0 \end{array}\right].
\end{align*}
It is clear that $\rho(\varphi_2), \rho(\varphi_3), \rho(\varphi_4)$ are pairwise distinct and one can check that any two of these (in either order) multiply to give the third. Thus, for any $z \neq 0, 1$, the image of the holonomy representation is the Klein-4 group. \\

Consider now the solution (2) over $\mathbb{F}_4[x]/(x^2)$. We will see that we can achieve a larger image by not working over $\C$ and using this labelling. Repeating the above procedure, we find that for $v_0, v_1, v_2, v_3, v_0^\prime, v_1^\prime, v_2^\prime, v_3^\prime$ receive the labels $[1,0]^t, [0,1]^t, [1,1]^t, [1,a]^t, [0,1]^t, [1,1]^t, [1,a+bx]^t, [1,0]^t$, respectively. The associated holonomy representation $\rho^\prime$ is generated by:
\begin{align*}
&\rho^\prime(\varphi_2)\colon \left[\begin{array}{c} 1 \\ 0 \end{array}\right],\left[\begin{array}{c} 0 \\ 1 \end{array}\right],\left[\begin{array}{c} 1 \\ a \end{array}\right] \mapsto \left[\begin{array}{c} 1 \\ 1 \end{array}\right],\left[\begin{array}{c} 1 \\ a+bx \end{array}\right],\left[\begin{array}{c} 0 \\ 1 \end{array}\right] \leadsto \rho^\prime(\varphi_2) = \left[\begin{array}{cc} a & 1 \\ a & a+bx \end{array}\right] \\
&\rho^\prime(\varphi_3)\colon \left[\begin{array}{c} 1 \\ 0 \end{array}\right],\left[\begin{array}{c} 1 \\ 1 \end{array}\right],\left[\begin{array}{c} 1 \\ a \end{array}\right] \mapsto \left[\begin{array}{c} 1 \\ a+bx \end{array}\right],\left[\begin{array}{c} 0 \\ 1 \end{array}\right],\left[\begin{array}{c} 1 \\ 0 \end{array}\right] \leadsto \rho^\prime(\varphi_3) = \left[\begin{array}{cc} 1 & 1 \\ a+bx & 1+ax \end{array}\right] \\
&\rho^\prime(\varphi_4)\colon \left[\begin{array}{c} 0 \\ 1 \end{array}\right],\left[\begin{array}{c} 1 \\ 1 \end{array}\right],\left[\begin{array}{c} 1 \\ a \end{array}\right] \mapsto \left[\begin{array}{c} 1 \\ 0 \end{array}\right],\left[\begin{array}{c} 1 \\ a+bx \end{array}\right],\left[\begin{array}{c} 1 \\ 1 \end{array}\right] \leadsto \rho^\prime(\varphi_4) = \left[\begin{array}{cc} x & 1+x \\ a+bx & 0 \end{array}\right].
\end{align*}
It can now be checked that 
\[
\rho^\prime(\varphi_2)^2 = \rho^\prime(\varphi_3)^2 = \rho^\prime(\varphi_4)^2 = \left[\begin{array}{cc} 1 & bx \\ x & 1 \end{array}\right]
\]
and if we denote this common square $J$, that
\[
J^2 = 1 \hspace{1cm} \rho^\prime(\varphi_2)\rho^\prime(\varphi_3)\rho^\prime(\varphi_4) = J.
\]
It follows that this holonomy representation is faithful with image isomorphic to $Q_8$, where an explicit isomorphism is given by $J,\rho^\prime(\varphi_2),\rho^\prime(\varphi_3),\rho^\prime(\varphi_4) \mapsto -1,i,j,k$.


\addcontentsline{toc}{section}{References}


\address{Universit\"at Regensburg, Fakult\"at f\"ur Mathematik, 93053 Regensburg, Germany} 
\address{Department of Mathematics, University of Michigan, Ann Arbor, MI 48109-1043, USA} 
\address{School of Mathematics and Statistics, The University of Sydney, NSW 2006, Australia} 
\email{sfriedl@gmail.com} 
\email{montekg@umich.edu}
\email{stephan.tillmann@sydney.edu.au} 

\Addresses


\section*{Appendix}

\textbf{Ideal membership check for $S_5$.} The following is SageMath, \cite{Sage}, code verifying the claim in the proof of Theorem~\ref{thm:Sn} that $S_5$ is not residually $\PSL_2$--finite. Here we map the generators as follows: \[x_1 \mapsto \left(\begin{array}{cc}a & b\\ c&d\end{array}\right) \hspace{1cm} x_2 \mapsto \left(\begin{array}{cc} i & j\\ k&l\end{array}\right) \hspace{1cm} x_3 \mapsto \left(\begin{array}{cc} p& q\\ r&s\end{array}\right) \hspace{1cm} x_4 \mapsto \left(\begin{array}{cc} w & x\\ y&z\end{array}\right). \]

\begin{lstlisting}[style=base, mathescape=true, escapeinside={<!}{!>}]
R = PolynomialRing(ZZ,16,'abcdijklpqrswxyz')

R

	@Multivariate Polynomial Ring in a, b, c, d, i, j, k, l, p, q, r, s, w, x, y, z over Integer Ring@
	
a,b,c,d,i,j,k,l,p,q,r,s,w,x,y,z = R.gens()

<!\textcolor{gray}{\# The determinant polynomials}!>
f1 = a*d-b*c-1
f2 = i*l-k*j-1
f3 = p*s-r*q-1
f4 = w*z-y*x-1

<!\textcolor{gray}{\# The polynomials corresponding to the relations $x_i^2 = 1$}!>
f5 = a^2 + b*c-1
f6 = b*(a + d)
f7 = c*(a + d)
f8 = d^2 + b*c-1
f9 = i^2 + j*k-1
f10 = j*(i + l)
f11 = k*(i + l)
f12 = l^2 + j*k-1
f13 = p^2 + q*r-1
f14 = q*(p + s)
f15 = r*(p + s)
f16 = s^2 + q*r-1
f17 = w^2 + x*y-1
f18 = x*(w + z)
f19 = y*(w + z)
f20 = z^2 + x*y-1

<!\textcolor{gray}{\# The polynomials corresponding to the relations $(x_ix_{i+1})^3 = 1$}!>
f21 = i*(a*(i*(c*(a*j + b*l) + a*(b*k + a*i)) + k*(d*(a*j + b*l) + b*(b*k + a*i))) + c*(j*(c*(a*j + b*l) + a*(b*k + a*i)) + l*(d*(a*j + b*l) + b*(b*k + a*i)))) + k*(b*(i*(c*(a*j + b*l) + a*(b*k + a*i)) + k*(d*(a*j + b*l) + b*(b*k + a*i))) + d*(j*(c*(a*j + b*l) + a*(b*k + a*i)) + l*(d*(a*j + b*l) + b*(b*k + a*i))))-1
f22 = j*(a*(i*(c*(a*j + b*l) + a*(b*k + a*i)) + k*(d*(a*j + b*l) + b*(b*k + a*i))) + c*(j*(c*(a*j + b*l) + a*(b*k + a*i)) + l*(d*(a*j + b*l) + b*(b*k + a*i)))) + l*(b*(i*(c*(a*j + b*l) + a*(b*k + a*i)) + k*(d*(a*j + b*l) + b*(b*k + a*i))) + d*(j*(c*(a*j + b*l) + a*(b*k + a*i)) + l*(d*(a*j + b*l) + b*(b*k + a*i))))
f23 = i*(a*(i*(c*(c*j + d*l) + a*(d*k + c*i)) + k*(d*(c*j + d*l) + b*(d*k + c*i))) + c*(j*(c*(c*j + d*l) + a*(d*k + c*i)) + l*(d*(c*j + d*l) + b*(d*k + c*i)))) + k*(b*(i*(c*(c*j + d*l) + a*(d*k + c*i)) + k*(d*(c*j + d*l) + b*(d*k + c*i))) + d*(j*(c*(c*j + d*l) + a*(d*k + c*i)) + l*(d*(c*j + d*l) + b*(d*k + c*i))))
f24 = j*(a*(i*(c*(c*j + d*l) + a*(d*k + c*i)) + k*(d*(c*j + d*l) + b*(d*k + c*i))) + c*(j*(c*(c*j + d*l) + a*(d*k + c*i)) + l*(d*(c*j + d*l) + b*(d*k + c*i)))) + l*(b*(i*(c*(c*j + d*l) + a*(d*k + c*i)) + k*(d*(c*j + d*l) + b*(d*k + c*i))) + d*(j*(c*(c*j + d*l) + a*(d*k + c*i)) + l*(d*(c*j + d*l) + b*(d*k + c*i))))-1
f29 = w*(p*(w*(p*(p*w + q*y) + r*(p*x + q*z)) + y*(q*(p*w + q*y) + s*(p*x + q*z))) + r*(x*(p*(p*w + q*y) + r*(p*x + q*z)) + z*(q*(p*w + q*y) + s*(p*x + q*z)))) + y*(q*(w*(p*(p*w + q*y) + r*(p*x + q*z)) + y*(q*(p*w + q*y) + s*(p*x + q*z))) + s*(x*(p*(p*w + q*y) + r*(p*x + q*z)) + z*(q*(p*w + q*y) + s*(p*x + q*z))))-1
f30 = x*(p*(w*(p*(p*w + q*y) + r*(p*x + q*z)) + y*(q*(p*w + q*y) + s*(p*x + q*z))) + r*(x*(p*(p*w + q*y) + r*(p*x + q*z)) + z*(q*(p*w + q*y) + s*(p*x + q*z)))) + z*(q*(w*(p*(p*w + q*y) + r*(p*x + q*z)) + y*(q*(p*w + q*y) + s*(p*x + q*z))) + s*(x*(p*(p*w + q*y) + r*(p*x + q*z)) + z*(q*(p*w + q*y) + s*(p*x + q*z))))
f31 = w*(p*(w*(p*(r*w + s*y) + r*(r*x + s*z)) + y*(q*(r*w + s*y) + s*(r*x + s*z))) + r*(x*(p*(r*w + s*y) + r*(r*x + s*z)) + z*(q*(r*w + s*y) + s*(r*x + s*z)))) + y*(q*(w*(p*(r*w + s*y) + r*(r*x + s*z)) + y*(q*(r*w + s*y) + s*(r*x + s*z))) + s*(x*(p*(r*w + s*y) + r*(r*x + s*z)) + z*(q*(r*w + s*y) + s*(r*x + s*z))))
f32 = x*(p*(w*(p*(r*w + s*y) + r*(r*x + s*z)) + y*(q*(r*w + s*y) + s*(r*x + s*z))) + r*(x*(p*(r*w + s*y) + r*(r*x + s*z)) + z*(q*(r*w + s*y) + s*(r*x + s*z)))) + z*(q*(w*(p*(r*w + s*y) + r*(r*x + s*z)) + y*(q*(r*w + s*y) + s*(r*x + s*z))) + s*(x*(p*(r*w + s*y) + r*(r*x + s*z)) + z*(q*(r*w + s*y) + s*(r*x + s*z))))-1

<!\textcolor{gray}{\# The polynomials corresponding to the relations $(x_ix_j)^2 = 1$}!>
f33 = p*(a*(a*p + b*r) + c*(a*q + b*s)) + r*(b*(a*p + b*r) + d*(a*q + b*s))-1
f34 = q*(a*(a*p + b*r) + c*(a*q + b*s)) + s*(b*(a*p + b*r) + d*(a*q + b*s))
f35 = p*(a*(c*p + d*r) + c*(c*q + d*s)) + r*(b*(c*p + d*r) + d*(c*q + d*s))
f36 = q*(a*(c*p + d*r) + c*(c*q + d*s)) + s*(b*(c*p + d*r) + d*(c*q + d*s))-1
f37 = w*(a*(a*w + b*y) + c*(a*x + b*z)) + y*(b*(a*w + b*y) + d*(a*x + b*z))-1
f38 = x*(a*(a*w + b*y) + c*(a*x + b*z)) + z*(b*(a*w + b*y) + d*(a*x + b*z))
f39 = w*(a*(c*w + d*y) + c*(c*x + d*z)) + y*(b*(c*w + d*y) + d*(c*x + d*z))
f40 = x*(a*(c*w + d*y) + c*(c*x + d*z)) + z*(b*(c*w + d*y) + d*(c*x + d*z))-1
f41 = w*(i*(i*w + j*y) + k*(i*x + j*z)) + y*(j*(i*w + j*y) + l*(i*x + j*z))-1
f42 = x*(i*(i*w + j*y) + k*(i*x + j*z)) + z*(j*(i*w + j*y) + l*(i*x + j*z))
f43 = w*(i*(k*w + l*y) + k*(k*x + l*z)) + y*(j*(k*w + l*y) + l*(k*x + l*z))
f44 = x*(i*(k*w + l*y) + k*(k*x + l*z)) + z*(j*(k*w + l*y) + l*(k*x + l*z))-1

I = (f1,f2,f3,f4,f5,f6,f7,f8,f9,f10,f11,f12,f13,f14,f15,f16,f17,f18,f19,f20,21,f22,f23,
f24,f25,f26,f27,f28,f29,f30,f31,f32,f33,f34,f35,f36,f37,f38,f39,f40,f41,f42,f43,f44)*R

a*i+b*k-1 in I

	@True@
	
a*j+b*l in I

	@True@

c*i+d*k in I

	@True@

c*j+d*l-1 in I

	@True@
\end{lstlisting}

\noindent \textbf{Ideal membership check for the case of the Heisenberg group.} The following is SageMath, \cite{Sage}, code verifying the claim that $x(1-y^2) \notin I$ where $I$ is the ideal $(x(1-y^2)^2,yz-1)$ in $\C[x,y,z]$.

\begin{lstlisting}[style=base]
R = PolynomialRing(CC,3,`xyz')

R

	@Multivariate Polynomial Ring in x,y,z over Complex Field with 53 bits of precision@
	
x,y,z = R.gens()

I = (x(1-y^2)^2,yz-1)*R

I

	@Ideal (x*y^4 + (-2.00000000000000)*x*y^2+x,y*z-1.00000000000000) of Multivariate Polynomial Ring in x,y,z over Complex Field with 53 bits of precision@

x(1-y^2) in I

	@False
\end{lstlisting}

\noindent \textbf{Triangulation of the $(4,1)$-Dehn filling of the figure-8 knot complement.}The following is SnapPy, \cite{SnapPy}, code which will produce a triangulation of the $(4,1)$-Dehn filling, using the knot theoretic framing, of the figure-8 knot complement and save it to a file entitled ``fig-eight-4-1.tri'' which can be imported into Regina.

\begin{lstlisting}[style=base]
@In@ @[@ @@1@@ @]@: M = Manifold('4_1')

@In@ @[@ @@2@@ @]@: M.dehn_fill( (4,1) )

@In@ @[@ @@3@@ @]@: N = M.filled_triangulation()

@In@ @[@ @@4@@ @]@: N.save('fig-eight-4-1.tri')
\end{lstlisting}

To import this triangulation into Regina, \cite{Regina}, follow: File --$>$ Import --$>$ SnapPea triangulation. \\

\textbf{Ideal membership check for the universal representation in our counterexample.} The following is SageMath, \cite{Sage}, code verifying the claim about the element $b^4$ in Section 4.3. Here the polynomials $f1,f2,f3$ arise as determinants, $f4,f5,f6,f7$ from the relation $ca^2 = b^2$, $f8,f9,f10,f11$ from the relation $c^{-1}b=bc$ and $f12,f13,f14,f15$ from the relation $ac^{-1}a^{-1}=cac$.

\begin{lstlisting}[style=base, mathescape = true, escapeinside = {<!}{!>}]
R = PolynomialRing(ZZ,12,`ijklpqrsxywz')

R

	@Multivariate Polynomial Ring in i,j,k,l,p,q,r,s,w,x,y,z over Integer Ring@
	
i,j,k,l,p,q,r,s,w,x,y,z = R.gens()

<!\textcolor{gray}{\# The determinant polynomials}!>
f1 = i*l-k*j-1
f2 = p*s-r*q-1
f3 = w*z-y*x-1

<!\textcolor{gray}{\# The polynomials corresponding to $ca^2 = b^2$}!>
f4 = w*i^2+w*j*k+x*i*k+x*l*k-p^2-q*r
f5 = w*i*j+w*l*j+x*j*k+x*l^2-p*q-s*q
f6 = y*i^2+y*j*k+z*i*k+z*l*k-r*p-r*s
f7 = y*i*j+y*l*j+z*j*k+z*l^2-q*r-s^2

<!\textcolor{gray}{\# The polynomials corresponding to $c^{-1}b = bc$}!>
f8 = p*w+q*y-p*z+x*r
f9 = p*x+q*z-q*z+x*s
f10 = r*w+s*y-w*r+p*y
f11 = r*x+s*z-w*s+y*q

<!\textcolor{gray}{\# The polynomials corresponding to $ac^{-1}a^{-1} = cac$}!>
f12 = w^2*i+w*j*y+x*k*w+x*l*y-i*z*l-i*x*k+j*y*l+j*w*k
f13 = w*i*x+w*j*z+k*x^2+x*l*z-y*j^2-j*w*i+i*z*j+x*i^2
f14 = y*i*w+j*y^2+z*k*w+z*l*y-k*z*l-x*k^2+y*l^2+l*w*k
f15 = y*i*x+y*j*z+z*k*x+l*z^2-l*y*j-l*w*i+k*z*j+k*x*i

I = (f1,f2,f3,f4,f5,f6,f7,f8,f9,f10,f11,f12,f13,f14,f15)*R

I

	@Ideal (i*l-k*j-1, p*s-r*q-1, w*z-y*x-1, w*i^2+w*j*k+x*i*k+x*l*k-p^2-q*r, w*i*j+w*l*j+x*j*k+x*l^2-p*q-s*q, y*i^2+y*j*k+z*i*k+z*l*k-r*p-r*s, y*i*j+y*l*j+z*j*k+z*l^2-q*r-s^2, p*w+q*y-p*z+x*r, p*x+q*z-q*z+x*s, r*w+s*y-w*r+p*y, r*x+s*z-w*s+y*q, w^2*i+w*j*y+x*k*w+x*l*y-i*z*l-i*x*k+j*y*l+j*w*k, w*i*x+w*j*z+k*x^2+x*l*z-y*j^2-j*w*i+i*z*j+x*i^2, y*i*w+j*y^2+z*k*w+z*l*y-k*z*l-x*k^2+y*l^2+l*w*k, y*i*x+y*j*z+z*k*x+l*z^2-l*y*j-l*w*i+k*z*j+k*x*i) of Multivariate Polynomial Ring in i,j,k,l,p,q,r,s,w,x,y,z over Integer Ring@

(p^2+qr)^2+qr(p+s)(p+s)-1 in I

	@True@

q(p^2+qr)(p+s)+q(p+s)(qr+s^2) in I

	@True@
	
r(p+s)(p^2+qr)+r(qr+s^2)(p+s) in I

	@True@
	
qr(p+s)(p+s)+(qr+s^2)^2-1 in I

	@True@
\end{lstlisting}

\end{document}